\date{ }
\newtheorem{lemma}{Lemma}
\newtheorem{theorem}[lemma]{Theorem}
\numberwithin{equation}{section}
\newtheorem{remark}[lemma]{Remark}
\newtheorem*{question}{Question}
\title{\bf The influence of cut vertices and eigenvalues on character graphs of solvable groups}
\author{\bf Roghayeh Hafezieh$^a$, Mohammad Ali Hosseinzadeh$^b$,\\
{\bf Samaneh Hossein-Zadeh$^b$, Ali Iranmanesh$^b$}\footnote{Corresponding Author: Ali Iranmanesh\ \hspace{15 cm}
E
E-mail addresses:
roghayeh@gtu.edu.tr (R. Hafezieh),
ma.hosseinzadeh@modares.ac.ir (M. A. Hosseinzadeh), s.hosseinzadeh@modares.ac.ir (S. Hossein-Zadeh),
iranmanesh@modares.ac.ir (A. Iranmanesh)}
\\[2mm]
$^a$\small Department of Mathematics, Gebze Technical University, P.O. Box 41400, Gebze, Turkey.\\
$^b$\small Department of Mathematics, Faculty of Mathematical
Sciences,\\ \small Tarbiat Modares University, P.O. Box 14115-137, Tehran, Iran.}
\begin{document}
\maketitle
\begin{abstract}
Given a finite group $G$, the \textit{character graph}, denoted by $\Delta(G)$, for its irreducible character degrees is a graph with vertex set $\rho(G)$ which is the set of prime numbers that divide the irreducible character degrees of $G$, and with $\{p,q\}$  being an edge if there exist a non-linear $\chi\in {\rm Irr}(G)$ whose degree is divisible by $pq$. In this paper, we discuss the influences of cut vertices and eigenvalues of $\Delta(G)$ on the group structure of $G$. Recently, Lewis and Meng proved the character graph of each solvable group has at most one cut vertex. Now, we determine the structure of character graphs of solvable groups with a cut vertex and diameter $3$. Furthermore, we study solvable groups whose character graphs have at most two distinct eigenvalues. Moreover, we investigate the solvable groups whose character graphs are regular with three distinct eigenvalues. In addition, we give some lower bounds for the number of edges of $\Delta(G)$.
\vskip 3mm

\noindent{\bf Keywords:} Character graph, Solvable group, Eigenvalue, Cut vertex.

\vskip 3mm

\noindent{\it 2010 AMS Subject Classification Number:} 20C15.

\end{abstract}

\section{Introduction}
Let $G$ be a finite group. We consider the set of irreducible complex characters of $G$, namely ${\rm Irr}(G)$, and the related degree set ${\rm cd}(G)=\{\chi(1) : \chi\in {\rm Irr}(G)\}$. Let $\rho(G)$ be the set of all primes which divide some character degrees of $G$. There is a large literature which  is  devoted  to  study  the ways  in  which  one  can  associate  a  graph  with  a  group,
for  the  purpose  of  investigating  the  algebraic  structure using  properties  of  the  associated  graph. One of these graphs is the character graph of $G$ which is denoted by $\Delta(G)$. It is an undirected graph with vertex set $\rho(G)$, where $p,q\in\rho(G)$ are joined by an edge if there exists an irreducible character degree
$\chi(1)\in {\rm cd}(G)$ which is divisible by $pq$.

It is an interesting subject to determine which finite simple graphs can occur as the
character graphs of finite groups. This question has attracted many researchers over the
years. For more information we refer to see the survey paper ~\cite{L} in which the author discussed many remarkable connections between this graph and some other graphs associated to irreducible character degrees, by analysing properties of these graphs for arbitrary positive integer subsets.

Since cut vertices, perfect matchings, and the number of edges affect the structure of the character graph, and in particular affect the structure of the group itself, in this paper, we will discuss the influences of cut vertices and perfect matchings on the character graphs of finite solvable groups.

Given a finite group $G$, in the second section, considering two cases with respect to solvability of $G$, we give a lower bound for the number of edges and an upper bound for the domination number of $\Delta(G)$. In addition, we give an example which implies that these bounds are best possible. Theorem ~\ref{NE} is the main theorem of this section.

In the third section, we prove that, for a solvable group $G$ whose $\rho(G)$ has even number of elements, $\Delta(G)$ has a perfect matching. Also, if $\Delta(G)$ has odd number of vertices, then each of its vertex-deleted subgraphs has a perfect matching. Furthermore, we discuss the influence of number of eigenvalues of $\Delta(G)$ on the group itself. Theorem ~\ref{reg} is the main theorem of this section.

In ~\cite{EI}, the authors discussed the necessary and sufficient conditions for $\Delta(G)$ to be Hamiltonian. As a graph with a cut vertex does not have a hamiltonian cycle, it is interesting to discuss the influence of such vertices on the structure of the graph $\Delta(G)$ and the structure of the group itself. Hence in section four, considering $G$ as a solvable group, we examine such vertices in $\Delta(G)$. We show that the character graph of any solvable group does not have more than one cut vertex and then, we determine the structure of such character graphs with one cut vertex and diameter $3$. Theorems ~\ref{thm: cut3} and ~\ref{cut} are the main theorems of this section.

In the rest of this section, we mention some notations which are used in this paper. Let $\mathcal{G}$ be a graph. The vertex set and the edge set of $\mathcal{G}$ are denoted by $V(\mathcal{G})$ and $E(\mathcal{G})$, respectively. The order of $\mathcal{G}$ is the number of its vertices. For a vertex $v\in V(\mathcal{G})$, by $N(v)$ we mean the neighbors of $v$ and so $deg(v) = |N(v)|$. The graph $\mathcal{G}-v$ is obtained by removing $v$ from $\mathcal{G}$. The graph $\mathcal{G}$ is called \textit{$k$-regular}, if the degree of each vertex is $k$. If $\mathcal{G}$ is neither empty nor complete, then it is said to be \textit{strongly regular} with parameters $(v,k,\lambda,\mu)$ if $|V(\mathcal{G})|=v$, $\mathcal{G}$ is $k$-regular, any two adjacent vertices of $\mathcal{G}$ have $\lambda$ common neighbors and any two non-adjacent vertices of $\mathcal{G}$ have $\mu$ common neighbors. We denote the complete graph and the cycle of order $n$, by $K_n$ and $C_n$, respectively. The \textit{distance} between two vertices $u$ and $v$ is the minimum length of paths with endpoints $u$ and $v$ which is denoted by $d(u,v)$. A path (cycle) containing all vertices of the graph $\mathcal{G}$ is called a \textit{Hamiltonian path (cycle)}. A graph $\mathcal{G}$ is called \textit{Hamiltonian}, if it has a Hamiltonian cycle. Let $n(\mathcal{G})$ be the number of connected components of $\mathcal{G}$. The {\it connectivity} of $\mathcal{G}$, $\kappa(\mathcal{G})$, is equal to the minimum number
of vertices in a set $S$ whose deletion results in $n({\mathcal{G}} - S) > n(\mathcal{G})$. Hence, if $\mathcal{G}$ is a connected graph, then $\kappa({\mathcal{G}}) \geq 1$. If the set $S$ contains only one vertex such as $v$, then we say $v$ is a \textit{cut vertex}. A \textit{cut edge} is an edge of a graph whose deletion increases its number of connected components. An {\it independent set} in $\mathcal{G}$ is a set of vertices of $\mathcal{G}$, no two of which are adjacent. An independent set in $\mathcal{G}$ is called {\it
maximum} if $\mathcal{G}$ contains no larger independent set. The cardinality of a maximum independent set in
$\mathcal{G}$ is called the {\it independent number}, denoted by $\alpha(\mathcal{G})$.
A \textit{clique} in $\mathcal{G}$ is a complete induced subgraph of $\mathcal{G}$. The \textit{adjacency matrix} of $\mathcal{G}$ is the $0-1$ matrix $A$ indexed by $V(\mathcal{G})$, where $A_{xy} = 1$ whenever there is an edge between $x$ and $y$ in $\mathcal{G}$ and $A_{xy} = 0$ otherwise. The eigenvalues of the adjacency matrix of the graph $\mathcal{G}$ are called {\it eigenvalues} of $\mathcal{G}$. Notice that a graph of order $n$ has $n$ eigenvalues.
The \textit{complement} $\bar{\mathcal{G}}$ of $\mathcal{G}$ is the graph whose vertex
set is $V(\mathcal{G})$ and whose edges are the pairs of non-adjacent vertices of $\mathcal{G}$.
A set $D\subseteq V(\mathcal{G})$ is called {\it dominating} if every vertex $w\in V(\mathcal{G})\setminus D$ is adjacent to a vertex of $D$. The {\it domination number} $\gamma (\mathcal{G})$ of a graph $\mathcal{G}$ is the size of a smallest dominating set of $\mathcal{G}$. Other notations are standard and taken mainly from ~\cite{BM,BH,IS}.

\section{Some Structural Properties of Character Graphs}

The character graph which has been studied extensively
over the last twenty years, has an important role to study the
character degrees of a group, see ~\cite{L}. In this section, we discuss some
combinatorial properties of the character graph, in
particular, we obtain some bounds for the number of edges
and the domination number of this graph, considering two
different cases, whether the group is solvable or not.

\begin{lemma}\label{P}
Let $G$ be a finite group and let $\pi$ be a subset of $\rho(G)$.
\begin{itemize}
\item[(i)] (Moret\'{o} and Tiep's Condition, ~\cite{MT}) If
$|\pi|\geq 4$, then there exists an irreducible character whose
degree is divisible by at least two primes from $\pi$.
\item[(ii)](P\'{a}lfy's Condition ~\cite{PP})  In particular, if $G$ is solvable and $|\pi|\geq 3$, then there exists an irreducible character whose
degree is divisible by at least two primes from $\pi$.
\end{itemize}
\end{lemma}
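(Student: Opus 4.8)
The plan is to derive the statement from results already available in the literature, namely the theorems of Moret\'o and Tiep (for (i)) and of P\'alfy (for (ii)); below I indicate how each is obtained and where the real work lies. It is convenient to rephrase both items graph-theoretically: item (i) asserts that for an \emph{arbitrary} finite group $G$ the complement $\overline{\Delta(G)}$ contains no $K_4$, while item (ii) asserts that, once $G$ is solvable, $\overline{\Delta(G)}$ is already triangle-free; equivalently, any four (respectively three) vertices of $\Delta(G)$ span at least one edge.

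For (ii) I would argue, following P\'alfy~\cite{PP}, by minimal counterexample: assume $G$ is solvable of least order carrying three primes $p_1,p_2,p_3\in\rho(G)$ such that no $\chi\in\irr{G}$ has $\chi(1)$ divisible by two of them, so that $\{p_1,p_2,p_3\}$ is an independent set of $\Delta(G)$. Using $\cd{G/N}\subseteq\cd{G}$ together with Clifford theory to track how degrees grow along a chief series, one reduces to the case where $G$ has a unique minimal normal subgroup $V$, an elementary abelian $q$-group on which $H:=G/V$ acts faithfully and irreducibly. Then the irreducible degrees of $G$ lying over a nontrivial $\theta\in\irr{V}$ have the form $|H:I_H(\theta)|\cdot\psi(1)$ with $\psi\in\irr{I_H(\theta)}$, so the primes appearing in $\cd{G}$ are controlled by the orbit sizes of $H$ on $\irr{V}\cong V$ and by the character degrees of point stabilisers. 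The independence of $\{p_1,p_2,p_3\}$ forces these data to be spread one prime per degree, and one extracts a contradiction from the orbit- and centraliser-size theorems for solvable linear groups (of Gluck-- and Wolf-type), which prevent the relevant orbit lengths or stabiliser degrees from being simultaneously coprime to two of the three primes. The main obstacle is precisely this last input: the reduction is elementary, but the contradiction rests on the nontrivial quantitative orbit theory of solvable linear groups, so in this paper the lemma is genuinely quoted rather than reproved.

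For (i) I would invoke Moret\'o--Tiep~\cite{MT}. The plan is a reduction, via the generalised Fitting subgroup $F^{*}(G)$, to almost simple and quasisimple groups: in a minimal counterexample $G$ with four primes $p_1,\dots,p_4$ pairwise nonadjacent in $\Delta(G)$, one pushes these primes down through abelian and then general solvable normal sections --- any solvable section that would account for three of the four primes is excluded by part (ii) --- until one must verify that no nonabelian simple group, and no quasisimple group, carries four pairwise nonadjacent primes in the set of primes dividing its character degrees. This final verification is where the Classification of Finite Simple Groups enters, through the known (highly connected, small-diameter) structure of the character degree graphs of the simple groups; this is the main obstacle and the reason the statement is not self-contained. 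Since both (i) and (ii) are established in the cited papers, nothing beyond recording them is needed for the present work.
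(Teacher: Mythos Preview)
Your proposal is correct and matches the paper's treatment: the paper does not prove this lemma at all but simply records it with citations to \cite{MT} and \cite{PP}, exactly as you note in your final sentence. Your additional sketches of the P\'alfy and Moret\'o--Tiep arguments are accurate in spirit and go beyond what the paper itself provides, but for the purposes of this paper nothing more than the citation is required.
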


\begin{theorem}{\label{NE}}
Let $G$ be a finite group whose character graph has $n$ vertices and $m$ edges.
\begin{itemize}
    \item [(a)] If $G$ is a solvable group, then $m \geq \frac{n}{2}\left( \frac{n}{2} - 1\right)$ and $\gamma(\Delta(G))\leq 2$. In particular, if $\Delta(G)$ is connected, then it has a Hamiltonian path.
    \item [(b)] If $G$ is a non-solvable group, then $m\geq\frac{n}{2}\left( \frac{n}{3} - 1 \right)$ and $\gamma(\Delta(G))\leq 3$.
\end{itemize}
\end{theorem}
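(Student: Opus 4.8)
The plan is to channel the whole statement through a single graph invariant, the independence number $\alpha(\Delta(G))$, and then apply three standard facts of graph theory. The input is Lemma~\ref{P}: P\'{a}lfy's condition (part (ii)) says that when $G$ is solvable no three vertices of $\Delta(G)$ are pairwise non-adjacent, so $\alpha(\Delta(G))\le 2$, while the Moret\'{o}--Tiep condition (part (i)) gives $\alpha(\Delta(G))\le 3$ for an arbitrary finite group. Writing $\alpha:=\alpha(\Delta(G))$, both (a) and (b) reduce to the uniform assertion ``$m\ge\frac{n}{2}\bigl(\frac{n}{\alpha}-1\bigr)$ and $\gamma(\Delta(G))\le\alpha$'', instantiated with $\alpha\le 2$ and $\alpha\le 3$, plus the extra Hamiltonicity clause in the solvable case.

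For the edge estimate I would pass to the complement $\overline{\Delta(G)}$, whose clique number is exactly $\alpha$, so $\overline{\Delta(G)}$ contains no $K_{\alpha+1}$. Tur\'{a}n's theorem then gives $|E(\overline{\Delta(G)})|\le(1-\frac1\alpha)\frac{n^2}{2}$, and hence $m=\binom{n}{2}-|E(\overline{\Delta(G)})|\ge\binom{n}{2}-(1-\frac1\alpha)\frac{n^2}{2}=\frac{n}{2}\bigl(\frac{n}{\alpha}-1\bigr)$; putting $\alpha=2$ and $\alpha=3$ recovers the two displayed bounds. For the domination number I would take a maximum independent set $D$ of $\Delta(G)$: every vertex outside $D$ must have a neighbour in $D$ (else it could be adjoined to $D$, contradicting maximality), so $D$ is a dominating set, giving $\gamma(\Delta(G))\le|D|=\alpha$, i.e.\ at most $2$ for solvable $G$ and at most $3$ otherwise.

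For the last clause, suppose $G$ is solvable and $\Delta(G)$ is connected, so $\kappa(\Delta(G))\ge 1$ while $\alpha(\Delta(G))\le 2$, whence $\kappa(\Delta(G))\ge\alpha(\Delta(G))-1$. I would then quote the Hamiltonian-path form of the Chv\'{a}tal--Erd\H{o}s theorem, obtained by adjoining a vertex $v$ adjacent to all of $\Delta(G)$: the resulting graph $\Delta'$ satisfies $\alpha(\Delta')=\alpha(\Delta(G))$ and $\kappa(\Delta')\ge\kappa(\Delta(G))+1\ge\alpha(\Delta')$, so $\Delta'$ is Hamiltonian, and deleting $v$ from a Hamiltonian cycle of $\Delta'$ leaves a Hamiltonian path of $\Delta(G)$; the cases $n\le 2$ are immediate.

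I do not expect a genuine obstacle here --- the proof is the assembly of Lemma~\ref{P} with Tur\'{a}n's theorem, the fact that a maximum independent set is dominating, and Chv\'{a}tal--Erd\H{o}s. The only delicate point is the bookkeeping with Tur\'{a}n's bound: it is precisely the inequality $e(T(n,\alpha))\le(1-\frac1\alpha)\frac{n^2}{2}$ that produces the clean fractions $\frac{n}{2}(\frac{n}{2}-1)$ and $\frac{n}{2}(\frac{n}{3}-1)$, and to justify the remark that the bounds are best possible one should separately exhibit solvable and non-solvable groups whose character graphs are the complements $\overline{T(n,2)}$ and $\overline{T(n,3)}$ of balanced complete multipartite graphs.
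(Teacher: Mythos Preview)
Your proposal is correct and follows essentially the same route as the paper: bound the independence number via Lemma~\ref{P}, apply Tur\'{a}n's theorem to the complement for the edge count, observe that a maximum independent set dominates, and invoke Chv\'{a}tal--Erd\H{o}s for the Hamiltonian path. The only cosmetic difference is that you package both cases uniformly through the single parameter $\alpha$ and spell out the universal-vertex trick for the Hamiltonian-path form of Chv\'{a}tal--Erd\H{o}s, whereas the paper treats (a) and (b) in parallel and simply cites \cite{CE}; the underlying argument is identical.
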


\begin{proof}

(a) Since $G$ is a solvable group, Lemma ~\ref{P} implies that the
complement of the graph $\Delta(G)$ is triangle free. On the
other hand, Tur\'{a}n's Theorem ~\cite[Theorem 12.3]{BM} verifies that
$|E(\overline{\Delta(G)})|\leq \frac{n^2}{4}$, so
${{n}\choose{2}}- m \leq \frac{n^2}{4}$. Hence $m \geq
\frac{n}{2}\left( \frac{n}{2} - 1\right)$, as we claimed. As
$\gamma(\mathcal{G})=1$ for a complete graph $\mathcal{G}$, we
may consider the case where $\Delta(G)$ is non-complete. Let $p$
and $q$ be two non-adjacent vertices of $\Delta(G)$. Lemma
~\ref{P} implies that every other vertices of $\Delta(G)$ is
adjacent to $p$ or $q$, therefore $\{p, q\}$ is a domination set
for $\Delta(G)$, which verifies that $\gamma(\Delta(G))\leq 2$.
Consider the case where $\Delta(G)$ is a connected graph. By Lemma ~\ref{P}, it is easy to see that $\alpha(\Delta(G))
\leq \kappa(\Delta(G)) + 1$. With respect to the results proved
by Chv\'{a}tal and Erd\"{o}s in ~\cite{CE}, we deduce that
$\Delta(G)$ has a Hamiltonian path.

(b) By Lemma ~\ref{P}, for a nonsolvable group $G$, the complement of the graph
$\Delta(G)$ contains no $K_4$. So, using Tur\'{a}n's Theorem ~\cite[Theorem 12.3]{BM} we have
$|E(\overline{\Delta(G)})|\leq \frac{2}{3}\left( \frac{n^2}{2}
\right)$. Hence ${{n}\choose{2}}- m \leq \frac{n^2}{3}$, which gives the result. Similar to
previous part and with respect to Lemma ~\ref{P}, one can see that
in this case $\gamma(\Delta(G))\leq 3$.
\end{proof}

\begin{remark}

1. It should be mentioned that solvability plays an important role in part $(a)$ of Theorem ~\ref{NE}.
It is famous that ${\rm cd}(PSL(2,2^{n}))=\{1,2^{n},2^{n}-1,2^{n}+1\}$. Also $|\rho(PSL(2,2^{n}))|=3$ if and only if
$n\in\{2,3\}$ (see ~\cite{Hu}). Now, let $H=PSL(2,2^{n})$, for $n\in\{2,3\}$ and $K$ be a non-abelian $p$-group, where $p > 7$ is a prime number.
Let $G=H\times K$, which is a non-solvable group. It is easy to see that $\Delta(G)$ is isomorphic to the star graph mentioned in Figure ~\ref{fig:11},
which is connected but it has no Hamiltonian path.

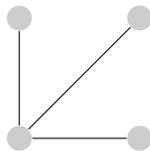
\begin{figure}[h]
\begin{center}
\begin{tikzpicture}
  [scale=.8,auto=left,every node/.style={circle,fill=black!20}]
  \node (n1) at (1,1)  {};
  \node (n2) at (1,3)  {};
  \node (n3) at (3,1)  {};
  \node (n4) at (3,3)  {};
  \foreach \from/\to in {n1/n2,n1/n3,n1/n4}
    \draw (\from) -- (\to);
\end{tikzpicture}
\end{center}
\caption{Star graph.}
~\label{fig:11}
\end{figure}

2. The upper bounds for $\gamma(\Delta(G))$, mentioned in Theorem
~\ref{NE}, are best possible. As an example of part $(a)$, one may
consider a solvable group $G$ of type five in ~\cite{ML2} whose
character graph is the graph with two connected components
in Figure \ref{fig:12}, while for part $(b)$, $PSL(2,16)$, whose
character graph is the graph with three connected components in
Figure \ref{fig:12} satisfies the equalities.
\end{remark}

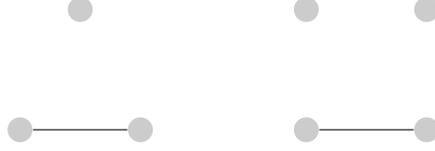
\begin{figure}[h]
\begin{center}
\begin{tikzpicture}
  [scale=.8,auto=left,every node/.style={circle,fill=black!20}]
  \node (n1) at (1,1)  {};
  \node (n2) at (3,1)  {};
  \node (n3) at (2,3)  {};

  \foreach \from/\to in {n1/n2}
    \draw (\from) -- (\to);
\end{tikzpicture}
\quad\quad\quad\quad\quad
\begin{tikzpicture}
  [scale=.8,auto=left,every node/.style={circle,fill=black!20}]
  \node (n1) at (1,1)  {};
  \node (n2) at (3,1)  {};
  \node (n3) at (1,3)  {};
  \node (n4) at (3,3)  {};

  \foreach \from/\to in {n1/n2}
    \draw (\from) -- (\to);
\end{tikzpicture}
\end{center}
\caption{Character graphs of a solvable and a nonsolvable group.}
~\label{fig:12}
\end{figure}

\section{Perfect Matchings and Eigenvalues of Character Graphs}

A \textit{matching} in a graph $\mathcal{G}$ is any set of edges $M\subseteq E(\mathcal{G})$ such that every vertex in
$\mathcal{G}$ is incident with at most one edge from $M$. The matching $M$ is called a \textit{perfect matching}, if every vertex of
$\mathcal{G}$ is incident with exactly one edge from $M$. A graph is called \textit{hypomatchable} if each of its vertex-deleted subgraphs has a perfect matching. If no vertex of a graph has three pairwise non-adjacent neighbors, it is called \textit{claw-free}. In the next theorem, we prove the existence of a perfect matching in character graphs of solvable groups under special conditions.

\begin{theorem}\label{pmatching}
Let $G$ be a solvable group whose character graph is connected. If the order of $\Delta(G)$ is even, then $\Delta(G)$ has a perfect matching. If $\Delta(G)$ has no cut vertex and its order is odd, then $\Delta(G)$ is hypomatchable. This property is not necessarily true for a nonsolvable group.
\end{theorem}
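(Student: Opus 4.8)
The plan is to exploit the strong structural restriction coming from Pálfy's Condition (Lemma~\ref{P}(ii)): for a solvable group $G$, the complement $\overline{\Delta(G)}$ is triangle-free, which means $\Delta(G)$ itself has independence number at most $2$. A connected graph on an even number of vertices with $\alpha(\Delta(G))\le 2$ is very dense, and I would use the classical matching criterion of Tutte–Berge: it suffices to show that for every $S\subseteq V(\Delta(G))$ the number $o(\Delta(G)-S)$ of odd components of $\Delta(G)-S$ satisfies $o(\Delta(G)-S)\le |S|$. The key observation is that since $\alpha(\Delta(G))\le 2$, at most two of the components of $\Delta(G)-S$ can be pairwise nonadjacent, so $\Delta(G)-S$ has at most two components altogether; moreover each of those components is itself a connected graph whose own complement is triangle-free.

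First I would dispose of the case where $\Delta(G)-S$ is connected: then $o(\Delta(G)-S)\le 1$, and this is $\le |S|$ unless $S=\emptyset$, in which case connectedness of $\Delta(G)$ together with evenness of its order forces (again by $\alpha\le 2$, hence the graph has a Hamiltonian path by the Chvátal--Erdős argument already used in the proof of Theorem~\ref{NE}, or more directly because a connected graph with $\alpha\le 2$ is either a complete graph or has a Hamiltonian cycle / is traceable) the existence of a perfect matching directly. Second, if $\Delta(G)-S$ has exactly two components $X_1,X_2$, then $o(\Delta(G)-S)\le 2\le|S|$ provided $|S|\ge 2$; the only remaining worry is $|S|=1$ with both $X_1$ and $X_2$ odd, but then $\Delta(G)$ would have a cut vertex, contradicting the hypothesis in the odd-order case and, in the even-order case, forcing $|V(\Delta(G))|=|X_1|+|X_2|+1$ to be even with $|X_1|,|X_2|$ odd — consistent — so here I genuinely must handle a cut-vertex situation. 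This is where I would invoke the Lewis–Meng theorem (referenced in the introduction) that $\Delta(G)$ has at most one cut vertex, together with the known classification of character graphs with a cut vertex, to check the Tutte–Berge count survives; alternatively, since the even-order statement allows a cut vertex, I expect one still gets a perfect matching because each odd component $X_i$, being connected with $\alpha(X_i)\le 2$ and of odd order, is itself hypomatchable (deleting the vertex adjacent to the cut vertex leaves an even-order connected graph with $\alpha\le 2$), so we match the cut vertex into one $X_i$ and complete the matching within each side.

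For the hypomatchable claim, I would argue that if $\Delta(G)$ has odd order, no cut vertex, and is connected, then for any vertex $v$ the graph $\Delta(G)-v$ has even order and still satisfies $\alpha(\Delta(G)-v)\le 2$; it need not be connected, but if it disconnects it has exactly two components (by $\alpha\le 2$), and since $v$ was not a cut vertex both components, together with $v$, reconnect — a short parity/edge-count check shows both components can be made even or are themselves hypomatchable, and then the Tutte–Berge verification above applies to $\Delta(G)-v$ directly. The main obstacle I anticipate is precisely the bookkeeping around cut vertices and small components: the inequality $o(\Delta(G)-S)\le |S|$ is trivially satisfied except in a narrow range of configurations ($|S|\le 1$, or $|S|=2$ with three odd components — impossible here since there are at most two components), so the whole proof reduces to a finite case analysis that leans on the Lewis–Meng cut-vertex bound and the explicit shape of the exceptional graphs. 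Finally, for the last sentence, I would exhibit a nonsolvable counterexample: a group such as $\mathrm{PSL}(2,2^n)$ (or the $H\times K$ example from the Remark) whose character graph is a graph of even order with an isolated-type behaviour or a star $K_{1,3}$-like connected graph that visibly has no perfect matching, showing solvability cannot be dropped.
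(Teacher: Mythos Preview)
Your approach via the Tutte--Berge formula is valid and genuinely different from the paper's, but it is more laborious than necessary and your execution has several fixable slips. The paper observes simply that P\'alfy's condition makes $\Delta(G)$ \emph{claw-free} (no induced $K_{1,3}$), and then invokes the Sumner--Las Vergnas theorem: every connected claw-free graph of even order has a perfect matching. For the hypomatchable claim the paper just notes that ``no cut vertex'' means $\Delta(G)-v$ is still connected --- you seem to have overlooked this: by the very definition of cut vertex, deleting a non-cut vertex from a connected graph leaves a connected graph --- so $\Delta(G)-v$ is connected, claw-free, of even order, and the same theorem applies. The nonsolvable counterexample given is $A_5\times E$ with $E$ extra-special of order $p^3$, $p\notin\{2,3,5\}$.

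Your Tutte--Berge route does go through once the bookkeeping is cleaned up, and it has the merit of avoiding the Sumner--Las Vergnas citation in favour of the more standard Tutte theorem. Two corrections: (1)~In the even-order case with $|S|=1$, you worry about both components $X_1,X_2$ being odd, but $|X_1|+|X_2|=n-1$ is odd, so exactly one of them is odd and $o(\Delta(G)-S)=1=|S|$ automatically; no appeal to Lewis--Meng or any cut-vertex classification is needed. (2)~Your treatment of $S=\emptyset$ is circular (you invoke a Hamiltonian path to produce the matching you are trying to prove exists); the correct check is simply that a connected graph of even order has $o(\Delta(G)-\emptyset)=0$. With these fixes your argument is complete and elementary, relying only on Tutte's theorem and the bound $\alpha(\Delta(G))\le 2$, whereas the paper's route is a one-line application of a less commonly cited matching theorem.
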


\begin{proof}
Let $G$ be a solvable group whose character graph is connected and it has even number of vertices. Lemma ~\ref{P}, implies that $\Delta(G)$ is claw-free. A claw-free connected graph with an even number of vertices has a perfect matching (see ~\cite{Las,S}), so does $\Delta(G)$. If $\Delta(G)$ has no cut vertex and its order is odd, then for each vertex of $\Delta(G)$ such as $v$, the graph $\Delta(G)-v$ is a connected claw-free graph whose order is even. Thus, $\Delta(G)-v$ has a perfect matching and so, $\Delta(G)$ is hypomatchable. On the other hand, consider the nonsolvable group $G=A_{5}\times E$, where $E$ is an extra-special group of order $p^3$ for some prime $p\not\in \{2,3,5\}$. One can see that $\Delta(G)$ has no perfect matching.
\end{proof}

In the sequel, we study solvable groups whose character graphs have at most three distinct eigenvalues.

\begin{theorem}\label{dist}
Let $G$ be a solvable group. The number of distinct eigenvalues of $\Delta(G)$ is equal to $1$ if and only if $G$ has one of the following properties:
\begin{itemize}
\item[(1)] $G\simeq A\times P$, where $A$ is an abelian group and $P$ is a non-abelian $p$-group, for a prime $p$.
\item[(2)] $G$ has a normal abelian subgroup $N$ whose index is a power of a prime number.
\item[(3)] $G$ is either a group of type two or three mentioned in ~\cite{ML2}.
\item[(4)] $G$ is a group of type one mentioned in ~\cite{ML2} where $|G/F(G)|$ is a prime power.
\item[(5)] $G$ is a group of type four mentioned in ~\cite{ML2} where in the sense of ~\cite{ML2}, $|E(G)/F(G)|$ and $m$ are distinct prime powers.
\item[(6)] $G$ is a group of type five mentioned in ~\cite{ML2} where in the sense of ~\cite{ML2}, $2^{a}+1$ is a prime power.
\end{itemize}
 Furthermore, this number is equal to $2$ if and only if $\Delta(G)= K_n$, for some $n\geq 2$.
\end{theorem}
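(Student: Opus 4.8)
My plan is to reduce each spectral hypothesis to a purely combinatorial property of $\Delta(G)$ and then feed this into Lemma~\ref{P} together with the classification of~\cite{ML2}. For the assertion about one distinct eigenvalue, I would begin with the elementary spectral fact that a graph has exactly one distinct eigenvalue precisely when it is a non-empty edgeless graph: if all eigenvalues of a symmetric matrix with zero diagonal equal $\lambda$, then the matrix is $\lambda I$, and the zero diagonal forces $\lambda=0$. Hence the hypothesis is equivalent to saying that $G$ is non-abelian and that every degree in $\cd{G}$ is a prime power. Now split on $|\rho(G)|$. If $|\rho(G)|=1$, then all irreducible degrees are powers of a single prime $p$, and the known description of such solvable groups---either a direct product of an abelian group with a non-abelian $p$-group, or a group with a normal abelian subgroup of $p$-power index---gives exactly cases (1) and (2). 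If $|\rho(G)|\ge 2$, then $\Delta(G)$ is edgeless on at least two vertices, hence disconnected; since $G$ is solvable, Lemma~\ref{P}(ii) makes $\overline{\Delta(G)}$ triangle-free, and a short argument (a transversal of the components is a clique in the complement, while a non-edge inside a component together with a vertex of the other component gives a triangle there) shows $\Delta(G)$ is a disjoint union of at most two cliques; being edgeless, it must be $\overline{K_2}$, so $|\rho(G)|=2$. Then $G$ is one of the five types of~\cite{ML2}, and what remains is to run through them, recording for each the two sets of primes carried by the two connected components of its degree graph and deciding when both components shrink to a single vertex: types two and three always do (case (3)), type one does exactly when $|G/F(G)|$ is a prime power (case (4)), type four when $|E(G)/F(G)|$ and $m$ are distinct prime powers (case (5)), and type five when $2^{a}+1$ is a prime power (case (6)). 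The converse directions are then routine: in (1) and (2) It\^o's theorem forces every degree to be a power of $p$, and in (3)--(6) the stated numerical conditions are exactly what make every degree a prime power, so $\Delta(G)$ is edgeless and has one eigenvalue.

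For the assertion about two distinct eigenvalues, I would first prove the graph-theoretic statement that a graph has exactly two distinct eigenvalues $\theta_1>\theta_2$ if and only if it is a disjoint union of $t\ge 1$ copies of one complete graph $K_m$ with $m\ge 2$. One direction is the spectrum $\{m-1,-1\}$ of $K_m$. For the other, the minimal polynomial gives $A^{2}=(\theta_1+\theta_2)A-\theta_1\theta_2 I$; comparing diagonal entries shows $\Delta(G)$ is regular of degree $-\theta_1\theta_2$, which is at least $1$ (else the graph is edgeless and has a single eigenvalue), so there is no isolated vertex, while comparing off-diagonal entries shows that two non-adjacent vertices have no common neighbour, so no vertex lies at distance $2$ from another vertex in its component; hence every component is complete, and regularity makes them all the same $K_m$ with $m\ge 2$. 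It then remains to see that $t=1$ when $\Delta(G)$ is the character graph of a solvable group: otherwise $\Delta(G)$ is disconnected, hence (by the triangle-free-complement argument above) a union of exactly two cliques, so $\Delta(G)=K_m\sqcup K_m$ with $m\ge 2$ and $|\rho(G)|\ge 4$; but by the classification of~\cite{ML2} a disconnected character graph of a solvable group has one of its two components equal to a single vertex, a contradiction. Therefore $\Delta(G)=K_n$, and $n\ge 2$ because $K_1$ has only the eigenvalue $0$; conversely $K_n$ with $n\ge 2$ has exactly the two eigenvalues $n-1$ and $-1$.

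The hard part will not be the spectral reductions---which are short and standard---but the type-by-type analysis in the one-eigenvalue case: extracting from~\cite{ML2} the precise arithmetic conditions (3)--(6) under which the two-component degree graph of each of the five types degenerates to $\overline{K_2}$, and pairing this with a correct statement of the single-prime classification needed when $|\rho(G)|=1$. The two-eigenvalue case then leans on the same classification, used only through the fact that a disconnected solvable character graph always has a one-vertex component.
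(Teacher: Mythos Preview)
Your overall strategy matches the paper's: reduce the spectral hypotheses to $\Delta(G)$ being edgeless (one eigenvalue) or a disjoint union of equal cliques (two eigenvalues), then invoke P\'alfy's condition and Lewis's classification in~\cite{ML2}. The one-eigenvalue analysis is essentially the paper's argument, with one slip: Lewis's classification in~\cite{ML2} has \emph{six} types, not five, and you need to explicitly exclude type six (the paper does this by observing that for type six one component of $\Delta(G)$ always has at least two vertices, so $\Delta(G)\neq K_1\cup K_1$).

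There is, however, a genuine gap in your two-eigenvalue argument. To rule out $\Delta(G)=K_m\sqcup K_m$ with $m\ge 2$, you assert that ``by the classification of~\cite{ML2} a disconnected character graph of a solvable group has one of its two components equal to a single vertex.'' This is false: solvable groups with both components of size at least two do occur (for instance component sizes $(2,3)$ arise among Lewis's types). What the paper actually uses is P\'alfy's inequality from~\cite{PP2}: if the components have sizes $n_1\le n_2$ then $n_2\ge 2^{n_1}-1$. For $K_m\sqcup K_m$ this would require $m\ge 2^{m}-1$, which fails for every $m\ge 2$, and that is how the disconnected case is eliminated. Replace your incorrect claim with this inequality and the argument goes through; the rest of your spectral reduction (the minimal-polynomial computation showing that a graph with exactly two eigenvalues is a disjoint union of copies of a single $K_m$) is a fine alternative to the paper's citation of~\cite{BH}.
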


\begin{proof}
A graph with only one eigenvalue is edgeless, so its eigenvalues are $0$'s, while a connected graph with
two distinct eigenvalues is a complete graph $K_n$, $n\geq 2$, whose distinct eigenvalues are $-1$ and $n-1$, see ~\cite[pp. 221]{BH}. Also, by ~\cite[Proposition 1.3.6]{BH}, if $\mathcal{G}$ is a graph with connected components ${\mathcal{G}}_i$ $(1 \leq i \leq s)$, then
the spectrum of $\mathcal{G}$ is the union of the spectra of ${\mathcal{G}}_i$'s (and multiplicities are added). So, the graph $\Delta(G)$ has only one number of distinct eigenvalues if and only if $\Delta(G)$ is the union of $K_1$'s. Since $G$ is a solvable group, Lemma ~\ref{P} implies that $\Delta(G)$ has at most two connected components. Thus in this case, the number of distinct eigenvalues of $\Delta(G)$ is equal to $1$ if and only if $\Delta(G)= K_1$ or $K_1\cup K_1$. Suppose $\Delta(G)\simeq K_{1}$, so there exists a prime number, say $p$, such that $\rho(G)=\{p\}$. Let $\pi(G)=\{p=p_{1},..., p_{n}\}$ and let $A:=\prod_{i=2}^{n}P_{i}$, where  for each $i$, $P_{i}\in Syl_{p_{i}}(G)$. By Ito-Michler's theorem ~\cite[19.10 and 19.11]{Hu1}, $A$ is a normal abelian subgroup of $G$ whose index is a power of $p$. Thus $G=AP$, where $P\in Syl_{p}(G)$. If $P\lhd G$, then we have part $(1)$, otherwise we have part $(2)$. Now consider the case, where $\Delta(G)$ is $K_1\cup K_1$. This implies that $G$ is one of the groups of types one to six, mentioned in ~\cite{ML2}. By ~\cite[Lemma 3.2, 3.3]{ML2}, if $G$ is a group of type two or three, then $\{2\}$ and $\{3\}$ are the connected components of its character graph. As for a group of type six a connected component of the character graph contains at least two vertices, $G$ is not a group of type six. In the sense of ~\cite[Lemma 3.4, 3.5]{ML2}, if $G$ is a group of type five, $\Delta(G)=K_{1}\cup K_{1}$ if and only if $2^{a}+1$ is a prime power, and if $G$ is a group of type four, then $\Delta(G)=K_{1}\cup K_{1}$ if and only if $|E(G)/F(G)|$ and $m$ are distinct prime powers. So we may have either of the cases $(5)$ or $(6)$. Finally, if $G$ is a group of type one, then by ~\cite[Lemma 3.1]{ML2}, $\Delta(G)= K_1\cup K_1$ if and only if $|G/F(G)|$ is a prime power.

Conversely, it is obvious that if $G$ is a group mentioned in $(1)$, then $\Delta(G)$ is an isolated vertex, hence it has only one eigenvalue. Now suppose that we have part $(2)$; Since $N$ is normal abelian, ~\cite[Theorem 6.15]{IS} implies that for each nonlinear $\chi\in {\rm Irr}(G)$, $\chi(1)||G:N|$ where $|G:N|$ is a prime power. Thus $\Delta(G)$ is an isolated vertex and so it has only one eigenvalue. If $G$ is any of the groups mentioned in cases $(3)-(6)$, then by ~\cite[Lemma 3.1, 3.2, 3.3, 3.4, 3.5]{ML2}, one can see that $\Delta(G)$ is the union of two isolated vertices and so it has only one distinct eigenvalue.

Now, suppose that $\Delta(G)$ has two distinct eigenvalues. Since $G$ is a solvable group, by Lemma ~\ref{P}, $\Delta(G)= K_n$ or $K_n\cup K_n$ for some $n\geq 2$. On the other hand, in ~\cite{PP2}, it is proved that for a solvable group $G$, if $\Delta(G)$ has two connected components of orders $n_1$ and $n_2$ with $n_1\leq n_2$, then $n_2\geq 2^{n_1}-1$. Using this, $\Delta(G)\neq K_n\cup K_n$ for each $n\geq 2$. Hence, the number of distinct eigenvalues of $\Delta(G)$ is equal to $2$ if and only if $\Delta(G)= K_n$, where $n\geq 2$.
\end{proof}

\begin{theorem}\label{reg}
Let $G$ be a solvable group such that $\Delta(G)$ is a regular graph with $n$ vertices. $\Delta(G)$ has $3$ distinct eigenvalues if and only if we have the following equivalent properties:
\begin{itemize}
\item[(i)]$n\geq 4$ is even and $\Delta(G)=K_n - M$, where $M$ is a perfect matching. In particular, $\Delta(G)$ is $(n-2)$-regular.
\item[(ii)] $G$ is the direct product of at least two groups with disconnected character graphs of $2$ vertices, such that the prime divisors of character degrees of distinct factors of $G$ are disjoint.
\end{itemize}
\end{theorem}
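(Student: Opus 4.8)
The plan is to prove the theorem in two stages: first establish the graph-theoretic characterization $(i)$ of regular character graphs of solvable groups with exactly three distinct eigenvalues, and then match that graph to the group-theoretic description $(ii)$.

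For the forward direction, suppose $\Delta(G)$ is $k$-regular with exactly three distinct eigenvalues. First I would dispose of the disconnected case: by Lemma~\ref{P}, a solvable group has $\Delta(G)$ with at most two connected components, and if there are two components they must each be regular of the same degree $k$ (a regular graph with a given spectrum has all components on the same number of vertices only in restricted cases); but Theorem~\ref{dist}'s cited bound $n_2 \geq 2^{n_1}-1$ forbids two components of equal order, so $\Delta(G)$ is connected. A connected regular graph with exactly three distinct eigenvalues is strongly regular (this is the standard fact from \cite[Ch.~21]{BM} or \cite{BH}: regular $+$ three eigenvalues $\Longleftrightarrow$ strongly regular, once we exclude the complete graph which has only two). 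So $\Delta(G)$ is strongly regular with parameters $(n,k,\lambda,\mu)$. Now invoke the key structural fact about character graphs of solvable groups: by P\'alfy's condition (Lemma~\ref{P}(ii)), $\overline{\Delta(G)}$ is triangle-free. The complement of a strongly regular graph is strongly regular, hence $\overline{\Delta(G)}$ is a triangle-free strongly regular graph, which forces it to be a disjoint union of complete bipartite graphs $K_{a,a}$ or, more to the point, since it must be regular and triangle-free with the strong-regularity constraint, a perfect matching $\tfrac{n}{2}K_2$ is the only possibility compatible with $\Delta(G)$ being connected (a triangle-free strongly regular graph on $n$ vertices that is also the complement of a connected graph of diameter $2$ — one checks the parameter equations $\mu = 0$ in the complement, forcing each component of $\overline{\Delta(G)}$ to be a clique, and triangle-freeness makes each clique a $K_2$). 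Hence $\overline{\Delta(G)} = \tfrac{n}{2}K_2$ with $n$ even, $n \geq 4$ (for $n=2$ we'd get $K_2$, only two eigenvalues), i.e. $\Delta(G) = K_n - M$ for a perfect matching $M$, and this is visibly $(n-2)$-regular. The cocktail-party graph $K_n - M$ indeed has three eigenvalues $n-2, 0, -2$, so the equivalence with having three distinct eigenvalues is complete. This establishes $(i)$, and the reverse implication ($(i) \Rightarrow$ three eigenvalues) is the explicit spectrum computation just mentioned.

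For the equivalence $(i) \Leftrightarrow (ii)$, I would argue as follows. If $\Delta(G) = K_n - M$ with $M = \{\{p_1,q_1\},\dots,\{p_t,q_t\}\}$ a perfect matching ($n = 2t$), then the complement $\overline{\Delta(G)}$ has $t$ connected components, each a single edge $\{p_i,q_i\}$. This is precisely the situation handled by the results on direct products and disconnected character graphs: a theorem on character graphs (the one underlying the ``direct product'' phenomenon, which one can extract from the analysis of non-adjacent primes via Lemma~\ref{P} together with the Ito–Michler and Gallagher-type arguments used in Theorem~\ref{dist}) states that when $\overline{\Delta(G)}$ splits into components on the vertex sets $\pi_1,\dots,\pi_t$ with each $|\pi_i| \geq 2$, then $G$ decomposes as a direct product $G_1 \times \cdots \times G_t$ with $\rho(G_i) = \pi_i$ and the $\pi_i$ pairwise disjoint, and each $\Delta(G_i)$ is the complement of a single edge on $2$ vertices, i.e. $\overline{K_2} = 2K_1$ — a disconnected graph on $2$ vertices. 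Conversely, if $G = G_1 \times \cdots \times G_t$ with $t \geq 2$, each $\Delta(G_i)$ a disconnected graph on two vertices, and the prime sets pairwise disjoint, then since $\mathrm{cd}(G) = \{\prod d_i : d_i \in \mathrm{cd}(G_i)\}$ and the factor prime sets are disjoint, $\Delta(G)$ is the ``join-type'' construction whose complement is $\bigsqcup_i \overline{\Delta(G_i)} = \bigsqcup_i 2K_1 = n K_1$ — no wait, more carefully: the complement of $\Delta(G_1 \times G_2)$ when $\rho(G_1), \rho(G_2)$ are disjoint is $\overline{\Delta(G_1)} \cup \overline{\Delta(G_2)}$ (disjoint union, \emph{no} edges between the parts, since a product degree $d_1 d_2$ makes every prime of $\pi_1$ adjacent to every prime of $\pi_2$). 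Each $\overline{\Delta(G_i)}$ on two non-adjacent vertices is a single edge, so $\overline{\Delta(G)}$ is a disjoint union of $t$ edges — a perfect matching on $n = 2t \geq 4$ vertices — giving $\Delta(G) = K_n - M$. This closes the loop.

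The main obstacle I anticipate is pinning down exactly which triangle-free strongly regular graphs can be the complement of $\Delta(G)$: a priori there are sporadic triangle-free strongly regular graphs (Petersen, Clebsch, Hoffman–Singleton, Gewirtz, etc.), so I must rule these out as complements of character graphs of solvable groups. The way to do this cleanly is \emph{not} to classify triangle-free SRGs, but to note that since $\Delta(G)$ itself is connected with diameter $\leq 2$ (a connected strongly regular graph has diameter $2$) while also, by the general structure theory of solvable character graphs (Lewis–Meng and the references in the paper), $\Delta(G)$ is either complete or has a rather restricted shape — in particular, applying Lemma~\ref{P}(ii) to the complement says $\overline{\Delta(G)}$ is triangle-free, and a triangle-free graph that is strongly regular and whose complement is also connected and strongly regular forces $\mu_{\overline{\Delta(G)}} = 0$, i.e. $\overline{\Delta(G)}$ is a disjoint union of cliques; combined with triangle-freeness each clique has size $\leq 2$, and regularity forces all cliques to have the same size, which must be $2$ (size $1$ gives $\Delta(G) = K_n$, excluded). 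So the bad SRGs are eliminated by the $\mu = 0$ observation rather than by any case analysis. I would present this $\mu=0$ argument as the crux of the proof.
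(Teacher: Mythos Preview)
Your strategy diverges from the paper's at the crucial step, and the divergence introduces a real gap. The paper does \emph{not} argue via strongly regular graphs to reach (i); instead it invokes Morresi Zuccari's classification \cite{Z}, which says outright that a regular character graph of a solvable group is either $K_n$ or $K_n - M$. Having three eigenvalues then rules out $K_n$, and the rest is a short check that $K_n-M$ is strongly regular (hence has exactly three eigenvalues) together with the connectedness argument via P\'alfy's bound. Your route tries to re-derive this classification from the spectral side, and the step where it breaks is the claim that ``$\mu_{\overline{\Delta(G)}}=0$''. Triangle-free means $\lambda_{\overline{\Delta(G)}}=0$, not $\mu_{\overline{\Delta(G)}}=0$; the two are not interchangeable. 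Concretely, the Petersen graph is a triangle-free strongly regular graph with $\mu=1$, and its complement (the triangular graph $T(5)$, parameters $(10,6,3,4)$) is connected and strongly regular---so nothing in your argument excludes $\Delta(G)\cong T(5)$. The same goes for the complements of Clebsch, Gewirtz, Hoffman--Singleton, and the other sporadic triangle-free strongly regular graphs. Ruling these out is precisely the content hidden inside \cite{Z}; without that citation (or an independent argument of comparable depth), the forward direction to (i) is incomplete.

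A second, smaller gap: for $(i)\Rightarrow(ii)$ you invoke an unspecified ``theorem on character graphs'' asserting that if $\overline{\Delta(G)}$ splits into edge-components then $G$ splits as a direct product. That is exactly the statement of \cite[Theorem~A]{kayo}, which the paper cites by name; it is not something you can extract from Gallagher, Ito--Michler, and Lemma~\ref{P} alone in a line or two. Your argument for $(ii)\Rightarrow(i)$ via the character-degree set of a direct product is fine, and your handling of the disconnected case is essentially the paper's (though note that P\'alfy's bound does allow $n_1=n_2=1$; you then need the observation that $2K_1$ has a single eigenvalue to finish).
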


\begin{proof}
First, we show that $(i)$ and $(ii)$ are equivalent. Note that in any graph, the number of vertices of odd degree is even ~\cite[Corollary 1.2]{BM}. So in each $(n-2)$-regular graph, $n$ is even. Furthermore, in $(n-2)$-regular graphs, each vertex is not adjacent to only one vertex. Thus $(n-2)$-regular graphs are isomorphic to $K_n - M$. By ~\cite[Theorem A]{kayo}, for a solvable group $G$, $\Delta(G)$ is $(n-2)$-regular if and only if $G$ is the direct product of groups with disconnected character graphs of $2$ vertices. Notice that if the character degrees of some distinct factors of $G$ have a common divisor $p$, then by the structure of the character graph of direct product of groups, the vertex $p$ is adjacent to all the other vertices of $\Delta(G)$ (see ~\cite{HIHL}) and this contradicts $(n-2)$-regularity of $\Delta(G)$.

Now, we prove that $\Delta(G)$ has $3$ distinct eigenvalues if and only if $(i)$ holds. Since $G$ is a solvable group, with respect to the results in ~\cite{Z}, regularity of $\Delta(G)$ implies that it is either a complete or $(n-2)$-regular graph (i.e. $\Delta(G) = K_n$ or $K_n - M$ for a perfect matching $M$). Suppose that $\Delta(G)$ has three distinct eigenvalues. This implies that $\Delta(G)$ is not a complete graph, so it is $(n-2)$-regular. We claim that $\Delta(G)$ is connected; if not, then by Lemma ~\ref{P}, it has two connected components which are both complete graphs of orders $n_1$ and $n_2$, where $n=n_{1}+n_{2}$. As $\Delta(G)$ is regular, $n_1 = n_2$. Now by ~\cite{PP2}, $n_2\geq 2^{n_1}-1$ and so $n_{1}=n_{2}=1$. This forces $\Delta(G)$ to have only one distinct eigenvalue, which contradicts our hypothesis. Hence $\Delta(G)$ is a connected graph.
 As a connected regular graph has precisely three distinct eigenvalues if and only if it is strongly regular ~\cite[pp. 213]{BH}), to complete the proof, it is enough to prove that $K_n - M$ is a strongly regular graph. If $n=2$, then $K_n - M = K_1\cup K_1$ with one distinct eigenvalue $0$, which it is not strongly regular, so we may assume that $n\geq 4$. For each pair of adjacent vertices, say $x,y$ in $K_n - M$, all the other vertices in $M$ except their paired vertices are common neighbors of $x$ and $y$. Now consider two non-adjacent vertices $u$ and $v$. These two vertices are paired in $M$. So, all the other $n-2$ vertices are adjacent to both $u$ and $v$. Thus $K_n - M$ is a $(n,n-2,n-4,n-2)$ strongly regular graph, as expected.
\end{proof}

\section{Cut Vertices of Character Graphs}

In this section we discuss the influence of cut vertices on character graphs of solvable groups. Suppose that $G$ is a solvable group. In Theorem $A$ of ~\cite{EI}, the authors discussed a necessary and sufficient condition for $\Delta(G)$ to be Hamiltonian. They proved that $\Delta(G)$ is Hamiltonian if and only if it is a block with at least three vertices. As a block is a connected graph without a cut vertex, it would be interesting to consider the influence of cut vertices on connected character graphs of solvable groups.

\begin{lemma}~\cite[Lemma 18.2]{O}\label{lem:1}
Let $K\lhd G$ and $\frac{N}{K}=F(\frac{G}{K})$. Suppose that $\frac{G}{N}$ is nilpotent. Then we have:
\begin{itemize}
\item[(i)] $q\in\rho(G)$ for all $q\in\pi(\frac{G}{N})$ and they generate a clique in $\Delta(G)$.
\item[(ii)] If $v\in\rho(G)\setminus\pi(\frac{G}{K})$, then either
\begin{itemize}
\item[(a)] $v$ is adjacent to $p$ in $\Delta(G)$, for some $p\in\rho(G)\cap\pi(\frac{N}{K})$; or
\item[(b)] $v$ is adjacent to $q$ in $\Delta(G)$, for all $q\in\pi(\frac{G}{N})$.
\end{itemize}
\end{itemize}
\end{lemma}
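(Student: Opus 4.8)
The proof will combine Clifford theory along the normal series $K\lhd N\lhd G$ with Ito-Michler's theorem~\cite[19.10 and 19.11]{Hu1} and the one structural fact that drives the statement: since $G/N$ is nilpotent, for every set of primes $\pi$ it has a \emph{normal} Hall $\pi$-subgroup, whose preimage in $G$ is normal in $G$. I shall use freely that $G$ is solvable (so $C_{\overline{G}}(F(\overline{G}))\le F(\overline{G})$ for every quotient $\overline{G}$ of $G$) and that, if $M\lhd L$ with $L/M$ solvable, then $\eta(1)/\delta(1)$ divides $[L:M]$ whenever $\delta\in{\rm Irr}(M)$ lies under $\eta\in{\rm Irr}(L)$~\cite{IS}. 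Two reductions are convenient: inflation along $G\to G/K$ realises $\Delta(G/K)$ as an induced subgraph of $\Delta(G)$ and leaves $\pi(N/K)$ and $\pi(G/N)$ unchanged, so part (i) may be proved assuming $K=1$, i.e.\ $N=F(G)$ with $G/F(G)$ nilpotent; and in part (ii) the hypothesis $v\notin\pi(G/K)$ forces a Sylow $v$-subgroup of $G$ into $K$, so $v\mid|K|$ while $v\nmid[G:N]$ and $v\nmid[N:K]$.

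For part (i), with the reduction above: if some $q\in\pi(G/F)$ were not in $\rho(G)$ then Ito-Michler would give $G$ a normal abelian Sylow $q$-subgroup, which would lie in $F(G)=F$ and contradict $q\mid[G:F]$; hence $\pi(G/F)\subseteq\rho(G)$. For the clique, fix distinct $q_1,q_2\in\pi(G/F)$ and let $H\lhd G$ be the preimage of the (normal) Hall $\{q_1,q_2\}$-subgroup of $G/F$; then $F(H)=F$, $H/F$ is nilpotent with $\pi(H/F)=\{q_1,q_2\}$, and writing $H/F=(Q_1/F)\times(Q_2/F)$ with $Q_1,Q_2\lhd G$ one has $Q_1\cap Q_2=F$ and $[Q_1,Q_2]\le F$, while the Ito-Michler argument inside each $Q_i$ yields $q_i\in\rho(Q_i)$. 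Because $H\lhd G$, every $\nu\in{\rm Irr}(H)$ has $\nu(1)$ dividing $\chi(1)$ for some $\chi\in{\rm Irr}(G)$, so it is enough to produce \emph{one} $\nu\in{\rm Irr}(H)$ with $q_1q_2\mid\nu(1)$. I would do this by choosing $\theta\in{\rm Irr}(F)$ with a sufficiently large $H$-orbit — using that $H/F$ acts faithfully on $F/\Phi(F)$, hence (by a large-orbit theorem for solvable linear groups) on ${\rm Irr}(F/\Phi(F))$ — so that the Clifford correspondents over $\theta$ of suitable irreducible characters of $Q_1$ and of $Q_2$ contribute the factors $q_1$ and $q_2$, and then fusing these into a single irreducible character of $H=Q_1Q_2$ using $[Q_1,Q_2]\le F$. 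Obtaining one character of $H$ divisible simultaneously by $q_1$ and $q_2$ (rather than two characters, each divisible by one) is the main obstacle here, and is the place where the metanilpotent structure and the large-orbit input are genuinely needed.

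For part (ii), take $\chi\in{\rm Irr}(G)$ with $v\mid\chi(1)$ and descend: choosing $\varphi\in{\rm Irr}(N)$ and $\psi\in{\rm Irr}(K)$ below $\chi$, the divisibility fact recalled above gives $\chi(1)/\varphi(1)\mid[G:N]$ and $\varphi(1)/\psi(1)\mid[N:K]$, so from $v\nmid[G:N]$ and $v\nmid[N:K]$ we obtain $v\mid\psi(1)$; moreover the prime divisors of $\varphi(1)$ are those of $\psi(1)$ together with some primes of $N/K$. Now one splits according to which new primes must appear on the way up from $K$ to $G$: if some irreducible character of $N$ over $\psi$ (or, more generally, some irreducible character of $G$ over $\psi$) has degree divisible by a prime of $\pi(N/K)$, that prime lies in $\rho(G)\cap\pi(N/K)$ and is adjacent to $v$, which is alternative (a); otherwise one must show that $v$ is adjacent to \emph{every} $q\in\pi(G/N)$, by analysing ${\rm Irr}(G|\varphi_0)$ for a chosen $\varphi_0\in{\rm Irr}(N|\psi)$ and exploiting, one prime $q$ at a time, that the Sylow $q$-subgroup of the nilpotent quotient $G/N$ is normal, so that the inertia-and-ramification data above $\varphi_0$ cannot avoid the prime $q$. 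The delicate point — and the main obstacle of part (ii) — is to verify that this dichotomy is exhaustive, i.e.\ that failure of (a) really forces all of $\pi(G/N)$ into the neighbourhood of $v$ and not merely part of it; this seems to require, in addition to the Clifford and Gallagher bookkeeping, an induction on $|K|$ to dispose of primes dividing $\psi(1)$ that first occur inside $K$, together with careful use of the normality of the Hall subgroups of $G/N$.
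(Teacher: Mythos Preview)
The paper does not prove this lemma; it is quoted as \cite[Lemma 18.2]{O} (Manz--Wolf, \emph{Representations of Solvable Groups}) and used as a black box in the proof of Theorem~\ref{cutt}. There is therefore no proof in the paper against which to compare your proposal.

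Regarding the substance of your sketch: the reductions you make (passing to $G/K$ for part~(i); observing that $v\mid |K|$ while $v\nmid[G:K]$ for part~(ii)) are sound, and the Ito--Michler step giving $\pi(G/F)\subseteq\rho(G)$ is correct. Setting up part~(ii) via Clifford theory along $K\lhd N\lhd G$ is also the natural framework. However, you yourself flag, and do not resolve, the two genuine difficulties. In~(i), you need a \emph{single} irreducible character of $H$ whose degree is divisible by both $q_1$ and $q_2$; the large-orbit input you invoke would have to produce an orbit of $\theta$ under $H/F$ whose length is divisible by $q_1q_2$, which is strictly stronger than ``sufficiently large'', and the passage from separate characters of $Q_1$ and $Q_2$ to one character of $H=Q_1Q_2$ is not carried out. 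In~(ii), the claim that failure of~(a) forces adjacency of $v$ to \emph{every} $q\in\pi(G/N)$ is asserted but not argued; the induction on $|K|$ is only a hint. What you have is a plausible plan rather than a proof, with the hard steps still open. If you wish to complete it, the cited reference \cite[Lemma 18.2]{O} contains the full argument.
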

\vskip 0.4 true cm

It should be mentioned that during the period of submission of this paper, the authors recognized the paper ~\cite{LCUT} by M.L. Lewis and Q. Meng in which they proved the character graph of a solvable group has at most one cut vertex. Beyond the main idea of the proofs in ~\cite{LCUT} and in this paper which is mainly based on the partitions mentioned in ~\cite{Biss}, in our proofs we strictly discuss the structure of the character graph while it has a cut vertex. Hence we express the following results in complete forms:
\begin{theorem}\label{cutt}
Suppose that $G$ is a solvable group whose character graph is connected and it has a cut vertex of degree two, then $\Delta(G)$ is a path of length two. Furthermore there exists a normal subgroup $K$ of $G$ with $|{\rm cd}(\frac{G}{K})|=2$ and one of the following properties:
 \begin{itemize}
 \item[(i)] $\frac{G}{K}$ is a $p$-group, where $p$ is the cut vertex of $\Delta(G)$.
 \item[(ii)] $\frac{G}{K}$ is a Frobenius group with Frobenius kernel $\frac{N}{K}$, which is an elementary abelian $p$-group for a prime $p\in\rho(G)$ and Frobenius complement $\frac{L}{K}$ whose order is $f$, where ${\rm cd}(\frac{G}{K})=\{1,f\}$. Either $p$ or a prime divisor of $f$ is the cut vertex of $\Delta(G)$.
     \end{itemize}
\end{theorem}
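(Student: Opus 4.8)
The plan is to read off the shape of $\Delta(G)$ from the combinatorics, and then to strip off a carefully chosen normal subgroup whose quotient is essentially forced.

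\textbf{Step 1: $\Delta(G)$ is a path of length two.} Let $p$ be the cut vertex, so $N(p)=\{q,r\}$; since $\Delta(G)$ is connected, $\Delta(G)-p$ has exactly two components, $C_q\ni q$ and $C_r\ni r$ (a third component would be cut off from $p$, hence from $\Delta(G)$, because $N(p)=\{q,r\}$). If $\rho(G)$ contained a further prime $s$, say $s\in C_q$, then $s\not\sim r$ and $q\not\sim r$, so Lemma~\ref{P} applied to $\{s,q,r\}$ forces $s\sim q$; but every edge with one endpoint in $C_q$ and the other outside $C_q$ ends at $p$, so deleting $q$ separates $s$ from $p$ and $q$ becomes a second cut vertex, contradicting the Lewis--Meng theorem recorded in~\cite{LCUT}. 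Hence $\rho(G)=\{p,q,r\}$, and since $q\not\sim r$ while $p$ is joined to both, $\Delta(G)$ is the path $q-p-r$; in particular $\mathrm{cd}(G)$ has degrees divisible by $pq$ and by $pr$ but none divisible by $qr$.

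\textbf{Step 2: a minimal non-abelian quotient.} As $\Delta(G)$ is not complete, $G$ is non-nilpotent, so it has a non-abelian proper quotient; I would take $K\lhd G$ maximal with $\overline G:=G/K$ non-abelian (or, where Step~3 forces a finer choice, a suitable refinement of such a $K$). Then $\overline G'$ is the unique minimal normal subgroup of $\overline G$, it is an elementary abelian $p_0$-group, $\overline G/\overline G'$ is abelian, and every non-linear $\chi\in\mathrm{Irr}(\overline G)$ is faithful. If $Z(\overline G)\ne 1$ then $\overline G'\le Z(\overline G)$, so $\overline G$ is nilpotent, hence a $p_0$-group; then $|\overline G'|=p_0$ and $\mathrm{cd}(\overline G)=\{1,p_0^{\,a}\}$. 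If $Z(\overline G)=1$ then $C_{\overline G}(\overline G')=F(\overline G)$ is a $p_0$-group on which the abelian group $\overline G/F(\overline G)$ acts faithfully and irreducibly, hence cyclically; Burnside's basis theorem forbids $\overline G'\subsetneq F(\overline G)$, so $\overline G$ is a Frobenius group with elementary abelian kernel $\overline G'$ and cyclic complement $\overline L$, and $\mathrm{cd}(\overline G)=\{1,|\overline L|\}$. Either way $|\mathrm{cd}(\overline G)|=2$.

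\textbf{Step 3: reading off the two cases.} Since $\mathrm{cd}(\overline G)\subseteq\mathrm{cd}(G)$, the complete graph $\Delta(\overline G)$ is a clique of $P_3$, so $|\rho(\overline G)|\le 2$ and, if it equals $2$, one of its vertices is the unique vertex $p$ of $P_3$ incident with every edge. In the $p_0$-group case one has $p_0\in\rho(G)$, and I would show $p_0=p$: if $p_0$ were a leaf of $P_3$, then combining Lemma~\ref{lem:1} (applied with a $K'$ for which $G/K'$ has nilpotent quotient by its Fitting subgroup, so that the prime set of the top nilpotent section of $G$ is a clique of $P_3$) with the fact that $\{q,r\}$ is the only non-edge of $P_3$ yields a degree configuration impossible in $P_3$; after passing to such a $K'$ if necessary, this is case (i). In the Frobenius case one must verify that the kernel prime $p_0$ lies in $\rho(G)$, so that $\overline N/K:=\overline G'$ is elementary abelian for $p_0\in\rho(G)$; since $p_0$ is coprime to $|\overline L|$ and $\rho(\overline G)=\pi(|\overline L|)$ is a clique of $P_3$ avoiding $p_0$, the order $f:=|\overline L|$ has at most two prime divisors, and the shape of $P_3$ then leaves the cut vertex to be $p_0$ or a prime dividing $f$; this is case (ii). In both cases $K$ (respectively $K'$) is the required normal subgroup, with $|\mathrm{cd}(G/K)|=2$.

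\textbf{Where the difficulty lies.} Step~1 and the structural skeleton of Step~2 are routine. The crux is Step~3: showing that in the $p$-group case the prime may be taken to be the cut vertex and not a leaf, and that in the Frobenius case the kernel prime genuinely lies in $\rho(G)$ and is placed correctly relative to $P_3$. This forces one to exploit the fine interaction between Lemma~\ref{lem:1} and the $P_3$-structure, and---where the elementary argument is insufficient---the explicit list of solvable groups with disconnected character graph in~\cite{ML2}, used as in the proof of Theorem~\ref{dist}, to rule out the forbidden placements of the prime.
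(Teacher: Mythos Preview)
Your Step~1 is correct but takes a different route from the paper. You manufacture a second cut vertex and then invoke the Lewis--Meng theorem~\cite{LCUT} as a black box; the paper instead shows directly that $\Delta(G)$ must have the shape $r-v-s-K_\ell$ and kills $\ell\ge 1$ case by case, using~\cite{Zh} for $\ell=1$, \cite{L3} for $\ell=2$, and the Sass partition bound $|\rho_3|\ge 3$ from~\cite{Biss} for $\ell\ge 3$. Your argument is shorter, but note that in this paper Theorem~\ref{cutt} is a stepping stone toward the authors' own proof that $\Delta(G)$ has at most one cut vertex (Theorem~\ref{thm: cut3}); importing~\cite{LCUT} here short-circuits that programme. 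If you are writing this up independently, your route is perfectly legitimate.

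In Step~2 the construction of $K$ and the $p$-group/Frobenius dichotomy are fine, but the throwaway justification ``non-nilpotent, so it has a non-abelian proper quotient'' is false in general ($S_3$ is a counterexample); the correct reason is that $\Delta(G)=P_3$ forces $|\mathrm{cd}(G)|\ge 3$. Step~3 is, as you yourself flag, a genuine gap: you do not actually show that in the $p$-group case the prime must be the cut vertex, nor that in the Frobenius case the kernel prime lies in $\rho(G)$. The paper's treatment of this part is also brief, but it names and uses the specific tools: Gallagher's theorem \cite[Corollary~6.17]{IS}, which lets one multiply any $\chi\in\mathrm{Irr}(G)$ restricting irreducibly to $K$ by the nonlinear $\beta\in\mathrm{Irr}(G/K)$ and thereby force adjacencies in $\Delta(G)$, together with Lemma~\ref{lem:1} applied with $N/K=F(G/K)$ to locate the cut vertex among $p$ and $\pi(f)$. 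You should actually run these arguments against the three vertices of $P_3$ rather than gesture at them; the classification from~\cite{ML2} you propose as a fallback is not what the paper uses here and is heavier than needed.
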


\begin{proof}
Suppose that $v$ is a cut vertex of degree two and $N(v)=\{r,s\}$. Since $v$ is a cut vertex, $r$ and $s$ are in different blocks and so there is no edge between $r$ and $s$. We claim that one of $r$ or $s$ is an endpoint; if not, then there exist two vertices of $\Delta(G)$, say $t_{1}\in N(r)\setminus \{v\}$ and
$t_{2}\in N(s)\setminus \{v\}$. It is clear that $t_{1}\neq t_{2}$ and $t_{1}$ is not adjacent to $t_{2}$. Hence $\{v, t_{1}, t_{2}\}$ does not satisfy Lemma ~\ref{P}, a contradiction.
So without loss of generality we may assume that $r$ is an endpoint. Thus for each $q\in\rho(G)\setminus\{r,s,v\}$, $q$ is adjacent to $s$ and Lemma ~\ref{P} implies that any pair of such vertices are joined by an edge. Therefore $\Delta(G)\setminus\{r,v\}$ is a clique. So we may consider the character graph $\Delta(G)$ as the graph $\Delta(G):\quad r-v-s-K_{l}$, for some integer $l\geq 0$. If $l=0$, then $\Delta(G)$ is a path of length $2$. If $l=1$, then $\Delta(G)$ is a path of length $3$ which is impossible by ~\cite{Zh}. Also if $l=2$, then the diameter of $\Delta(G)$ is $3$ and by
~\cite[Main Theorem]{L3} such graph is not the character graph of a solvable group. Assume that $l\geq 3$, so $|\rho(G)|\geq 6$ and $diam(\Delta(G))=3$. Let $p_{1}$ and $p_{4}$ be those vertices with $d(p_{1},p_{4})=3$. Then $\rho(G)$ has a partition $\rho_{1}\cup\rho_{2}\cup\rho_{3}\cup\rho_{4}$ mentioned in ~\cite{Biss} as follows:
\begin{itemize}
  \item[.] $\rho_{4}=\{q\in\rho(G) : d(p_{1},q)=3\}$,
  \item[.] $\rho_{3}=\{q\in\rho(G) : d(p_{1},q)=2\}$,
  \item[.] $\rho_{2}=\{q\in\rho(G): d(p_{1},q)=d(q,z)=1, \text{for some } z\in\rho_{3}\}$, and
  \item[.] $\rho_{1}=\{p_{1}\}\cup\{q\in\rho(G): d(p_{1},q)=1, d(q,z)\neq 1 \text{ for each } z\in\rho_{3}\}$.
\end{itemize}
Note that we relabel if necessary so that $|\rho_{1}\cup\rho_{2}|\leq |\rho_{3}\cup\rho_{4}|$.
 With respect to the above notation, for $p_{1}=r$ and $p_{4}\in V(K_{l})$ we have $\rho_{1}=\{r\}$, $\rho_{2}=\{v\}$, $\rho_{3}=\{s\}$, and $\rho_{4}=V(K_{l})$. In particular, $|\rho_{3}|=1$ and this is a contradiction with ~\cite[Theorem 2]{Biss} which states $|\rho_{3}|\geq 3$. So there is no solvable group $G$ whose character graph has the above form for $l>0$. Hence we conclude that $\Delta(G)$ has a cut vertex of degree two if and only if it is a path of length two.

Suppose $\Delta(G): w-v-u$. Similar to the proof in ~\cite{H}, we can see that there exists a nontrivial normal subgroup $K$ of $G$ whose quotient is nonabelian and ${\rm cd}(\frac{G}{K})=\{1,f\}$. Applying Gallagher's theorem ~\cite[Corollary 6.17]{IS} we can see that if $\frac{G}{K}$ is a $p$-group for a prime $p$, then $p=v$ which is a cut vertex. Otherwise, ~\cite{IS} implies that $\frac{G}{K}$ is a Frobenius group with Frobenius kernel $\frac{N}{K}$, which is an elementary abelian $p$-group for a prime $p\in\rho(G)$ and Frobenius complement $\frac{L}{K}$ whose order is $f$. Using Gallagher's theorem ~\cite[Corollary 6.17]{IS}, we can see that if $p\notin\rho(G)$, $f$ is not a power of a prime $t$ unless $t=v$, which is a cut vertex.
While $|\pi(f)|=2$, considering different cases for the prime divisors of $f$, we can see that $v$ divides $f$.
Now suppose that $p\in\rho(G)$. Lemma ~\ref{lem:1} yields that either $p$ or a prime divisor of $f$ is the cut vertex of $\Delta(G)$.
\end{proof}

Lewis in \cite[Page 184]{L} asked which graphs with diameter $3$ occur as $\Delta(G)$ for some solvable group $G$.
In the next theorem, we prove that the character graph of any solvable group does not have more than one cut vertex. Specially, we determine the structure of the character graph of a solvable group with at least one cut vertex and diameter $3$.

\begin{theorem}~\label{thm: cut3}
If $G$ is a solvable group whose character graph is connected, then $\Delta(G)$ has at most one cut vertex. In particular, if $v$ is the unique cut vertex of $\Delta(G)$, then $diam(\Delta(G))=3$ if and only if $\Delta(G)$ has one of the following structures:
 \begin{itemize}
 \item[(1)] $\Delta(G)= w-v-K_{s}$, for some $w\in\rho(G)$ and some integer $s\geq 4$ such that $V(K_{s})\cup\{v\}$ does not generate a clique.
 \item[(2)] $\Delta(G)= K_{m}-v-K_{s}$, for some integers $m\geq 8$ and $s\geq 2$, where $V(K_{s})\cup\{v\}$ generates a clique and $1< |N(v)\cap V(K_{m})| < m$.
     \end{itemize}
Furthermore, in both structures $(1)$ and $(2)$, $deg(v)\geq 4$.
\end{theorem}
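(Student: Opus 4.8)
The plan is to extract, from P\'alfy's condition (Lemma~\ref{P}), a rigid combinatorial skeleton for $\Delta(G)$ near a cut vertex, and then to combine it with the four-part partition $\rho_1\cup\rho_2\cup\rho_3\cup\rho_4$ of \cite{Biss} for solvable character graphs of diameter $3$. First I would record, for an \emph{arbitrary} cut vertex $v$ of the connected graph $\Delta:=\Delta(G)$: (a) $\Delta-v$ has exactly two components $C_1,C_2$, because one vertex from each of three components would be a pairwise non-adjacent triple, violating Lemma~\ref{P}; (b) each $C_i$ is a clique, since non-adjacent $a,a'\in C_i$ together with any $b$ in the other component again violate Lemma~\ref{P}; and (c) $v$ has a neighbour in each $C_i$ by connectivity, while $N(v)$ contains all of at least one $C_i$, by applying Lemma~\ref{P} to $\{a_1,v,a_2\}$ with $a_i\in C_i\setminus N(v)$.

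For ``at most one cut vertex'': assume $u\neq v$ are both cut vertices and $u\in C_1$. Examining $\Delta-u$ forces $|C_1|\geq 2$ (else $u$ would be a pendant) and then $N(v)\cap C_1=\{u\}$ (else $\Delta-u$ stays connected); Lemma~\ref{P} then forces $C_2\subseteq N(v)$. Thus $\Delta$ consists of two cliques $C_1,C_2$ joined only along $a-u-v-c$ for $a\in C_1\setminus\{u\}$, $c\in C_2$, so $diam(\Delta)=3$; a direct distance computation shows that whichever of the two endpoints of a longest geodesic one takes as $p_1$, one gets $|\rho_3|=1$ ($\rho_3$ being $\{v\}$ or $\{u\}$), contradicting $|\rho_3|\ge 3$ of \cite[Theorem 2]{Biss}. (This already subsumes the exclusion of $P_4$, so \cite{Zh} is not strictly needed.) Hence $\Delta$ has at most one cut vertex.

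For the structure theorem, let $v$ be the unique cut vertex and $diam(\Delta)=3$. By (a)--(c), $\Delta-v=C_1\sqcup C_2$ with each $C_i$ a clique and, say, $C_2\subseteq N(v)$; since the diameter is $3$ and not $2$, $M':=C_1\setminus N(v)\neq\emptyset$, and I write $T:=N(v)\cap C_1$. Uniqueness of $v$ rules out $|T|=1$ (the single vertex of $T$ would be a second cut vertex, using $|C_1|\geq 2$), so $|T|\geq 2$. If $|C_2|=1$ we are in case $(1)$, with $w$ the vertex of $C_2$ and $K_s=C_1$, where $V(K_s)\cup\{v\}$ is not a clique because $M'\neq\emptyset$. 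If $|C_2|\geq 2$ we are in case $(2)$, with $K_s=C_2$, $K_m=C_1$, $V(K_s)\cup\{v\}$ a clique, and $1<|T|<m$. In each case I would write down the partition of \cite{Biss}, noting that the size convention $|\rho_1\cup\rho_2|\leq|\rho_3\cup\rho_4|$ forces the endpoint $p_1$ away from $M'$ (the choice $p_1\in M'$ would give $\rho_3=\{v\}$, impossible by \cite[Theorem 2]{Biss}), hence $\rho_3=T$; then the numerical part of \cite[Theorem 2]{Biss} yields $|T|\geq 3$ together with $s\geq 4$ in case $(1)$ and $m\geq 8$ in case $(2)$. The inequality $deg(v)\geq 4$ follows immediately, since $deg(v)=1+|T|\geq 4$ in case $(1)$ and $deg(v)=|C_2|+|T|\geq 2+2=4$ in case $(2)$. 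The converse is a one-line diameter computation: in shape $(1)$ the vertex $w$, and in shape $(2)$ any vertex of $V(K_m)\setminus N(v)$, has eccentricity exactly $3$, and no two vertices are further apart.

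The combinatorial steps above --- the distance computations, the explicit shape of the partition in each configuration, and small cases such as $|C_1|=|C_2|=1$ --- are routine. The genuine obstacle is the bookkeeping with \cite[Theorem 2]{Biss}: one must determine exactly which labelling of $\rho_1,\dots,\rho_4$ is forced (in case $(2)$ this amounts to proving $m>|C_2|+|T|$, since the other labelling collapses $\rho_3$ to a single vertex), and then identify precisely which of Biss's inequalities among $|\rho_1|,|\rho_2|,|\rho_3|,|\rho_4|$ deliver the sharp thresholds $m\geq 8$ and $s\geq 4$, rather than merely $|T|\ge 3$. A secondary point is to check that shapes $(1)$ and $(2)$ are genuinely consistent with $v$ being the \emph{unique} cut vertex, which is exactly where $1<|N(v)\cap V(K_m)|$ (resp.\ $|T|\geq 2$) enters.
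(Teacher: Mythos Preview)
Your proposal is correct and follows essentially the same route as the paper: P\'alfy's condition gives the two-clique decomposition $C_1,C_2$ around a cut vertex, and the Sass partition $\rho_1\cup\rho_2\cup\rho_3\cup\rho_4$ from \cite{Biss} supplies the contradictions and the numerical thresholds. Your organization is somewhat tighter than the paper's: you prove uniqueness of the cut vertex directly via $|\rho_3|=1$ (the paper first disposes of the $\deg(v)=2$ case by quoting Theorem~\ref{cutt}, which in turn cites \cite{Zh} to exclude $P_4$), and you obtain $s\geq 4$ in structure~(1) from $|T|\geq 3$ and $|M'|\geq 1$, whereas the paper appeals to \cite{ML3} (no $5$-vertex solvable character graph has diameter~$3$).

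On the point you flag as the ``genuine obstacle'': you are right that \cite[Theorem~2]{Biss} alone only gives $|T|\geq 3$, hence $m\geq 4$, in structure~(2). The paper closes this gap with \cite[Theorem~4]{Biss}, an exponential inequality of P\'alfy type between $|\rho_1\cup\rho_2|$ and $|\rho_3\cup\rho_4|$: once the canonical labelling is fixed as $\rho_1=C_2$, $\rho_2=\{v\}$, $\rho_3=T$, $\rho_4=M'$, one has $|\rho_1\cup\rho_2|=|C_2|+1\geq 3$, and Theorem~4 then yields $m=|C_1|=|\rho_3\cup\rho_4|\geq 2^{3}=8$. Your observation that the labelling with $p_1\in M'$ collapses $\rho_3$ to $\{v\}$ is exactly what forces the canonical labelling to be the other one, so the bookkeeping you anticipated is indeed the crux, and Theorem~4 (not Theorem~2) is the missing citation.
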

\begin{proof}
Let $v$ be a cut vertex of $\Delta(G)$ and so $deg(v)>1$. If $deg(v)=2$, then by Theorem~\ref{cutt}, the character graph of $G$ is a path of length $2$ which it has only one cut vertex. Thus we may assume that $deg(v)\geq 3$. If $\Delta(G)-v$ has at least three components, then by choosing a vertex from each components, we find three vertices such that there is no edge between any pair of them, which contradicts solvability of $G$ by Lemma ~\ref{P}. So $\Delta(G)-v$ has two components with vertex sets $C_1$ and $C_2$, respectively. By Lemma ~\ref{P}, one can see that $C_1$ and $C_2$ are complete graphs. So, we may have the following cases:
\begin{itemize}
\item[(i)] Either $|C_1\cap N(v)|=1$ or $|C_2\cap N(v)|=1$.
\item[(ii)] $|C_i\cap N(v)|\geq 2$ for $i=1, 2$.
\end{itemize}
Suppose that we have the case (i). Without loss of generality, we may assume that $|C_{1}\cap N(v)|=1$. Let $w\in C_{1}\cap N(v)$. For each $t\in C_{1}\setminus\{w\}$ and for each $l\in C_{2}$, it is clear that $d(t,l)=3$ which implies that $diam(\Delta(G))=3$. Suppose that $C_{1}\setminus\{w\}\neq\emptyset$. Let $p_{1}$ be a vertex of $C_{1}\setminus\{w\}$ and $p_{4}$ be a vertex of $C_{2}$. Since $d(p_{1},p_{4})=3$, $\rho(G)$ can be considered as the following disjoint union:
$$\rho(G)=\rho_{1}\cup\rho_{2}\cup\rho_{3}\cup\rho_{4},$$
where for each $i$, the subset $\rho_{i}$ is defined as in the proof of Theorem~\ref{cutt}. Considering the structure of $\Delta(G)$,  $\rho_{3}$ is one of the sets $\{v\}$ or $\{w\}$. Thus $|\rho_{3}|=1$ which contradicts ~\cite[Theorem 2]{Biss}, hence $C_{1}\setminus\{w\}=\emptyset$. This implies that $w$ is an end point and so is not a cut vertex. Also, since $deg(v)\geq 3$, $|C_2\cap N(v)|\geq 2$ and so $v$ is the unique cut vertex of $\Delta(G)$.

 In this case, if $diam(\Delta(G))=3$, then there exists $l\in C_{2}\setminus N(v)$. Since the diameter of character graph of a solvable group with order $5$ is at most $2$ (see ~\cite[Main Theorem]{ML3}), we have $|C_{2}|\geq 4$. Now with respect to the above notation, for $p_{1}:=w$ and $p_{4}:=l$, we have $\rho_{4}=C_{2}\setminus N(v)$, $\rho_{3}=N(v)\setminus\{w\}$, $\rho_{2}=\{v\}$, and $\rho_{1}=\{w\}$. By ~\cite[Theorem 2]{Biss},  $deg(v) = |\rho_{3}|+1\geq 4$. So $\Delta(G)$ has the structure $(1)$ in the theorem.

Considering case (ii), it is easy to see that $v$ is the unique cut vertex of $\Delta(G)$ with degree at least $4$. In this case, $diam(\Delta(G))=3$ if and only if there exists $t\in C_{i}\setminus N(v)$, for some $i\in\{1,2\}$. Without loss of generality, assume that $i=1$. By Lemma ~\ref{P} we can see that $C_{2}\cup\{v\}$ generates a clique. Let $l\in C_{2}$. Using the above notation, for $p_{1}:=l$ and $p_{4}:=t$, we have $\rho_{4}=C_{1}\setminus N(v)$, $\rho_{3}=C_{1}\cap N(v)$, $\rho_{2}=\{v\}$, and $\rho_{1}=C_{2}$. By ~\cite[Theorem 2]{Biss}, $|\rho_{3}| = |C_{1}\cap N(v)|\geq 3$. Since we started with the assumption $t\in C_{1}\setminus N(v)$, it is concluded that $|C_{1}|\geq 4$. Notice that $|\rho_{1}\cup \rho_{2}|=|C_2|+1\geq |N(v)\cap C_2|+1\geq 3$ and $|\rho_{3}\cup \rho_{4}|=|C_1|$. So by ~\cite[Theorem 4]{Biss}, $|C_1|\geq 2^{3}$. Thus for $m:=|C_{1}|$ and $s:=|C_{2}|$, we have the structure $(2)$ in theorem for $\Delta(G)$.

It should be mentioned that if we have either of the structures $(1)$ or $(2)$, then $diam(\Delta(G))=3$.
\end{proof}

Now, we propose the following question.

\begin{question}
In ~\cite{ML4}, Lewis has constructed a solvable group whose character graph has diameter $3$. This graph has the structure $(1)$ in Theorem ~\ref{thm: cut3} and its order is $6$. So, it has the smallest possible order. The minimum order of a character graph of a solvable group with a cut vertex and diameter $3$ which has the structure $(2)$ in Theorem ~\ref{thm: cut3}, is at least $11$. Does there exist a finite solvable group with such a character graph?
\end{question}

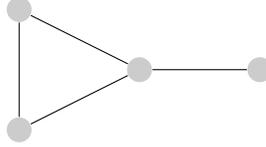
\begin{figure}[h]
\begin{center}
\begin{tikzpicture}
  [scale=.8,auto=left,every node/.style={circle,fill=black!20}]
  \node (n1) at (1,1)  {};
  \node (n2) at (1,3)  {};
  \node (n3) at (3,2)  {};
  \node (n4) at (5,2)  {};

  \foreach \from/\to in {n1/n2,n1/n3,n2/n3,n3/n4}
    \draw (\from) -- (\to);
\end{tikzpicture}
\end{center}
\caption{a connected graph with four vertices and one triangle.}
~\label{fig:10}
\end{figure}

\begin{theorem}~\label{cut}
Let $G$ be a solvable group whose character graph is a connected graph of order $n$ with a cut vertex $r$. We have the following properties:
 \begin{itemize}
 \item[(i)] If $\Delta(G)$ is $K_{4}$-free and $G$ contains a normal $r$-complement, then $n\leq 4$ and $\Delta(G)$ is either a path of length two or the graph in Figure ~\ref{fig:10}.
     \item[(ii)] If $\Delta(G)$ is $K_{n-1}$-free, $r\notin\rho(F(G))$ and $F(G)$ is nonabelian, then $|\rho(F(G))|\leq n-2$. In particular, if $|\rho(F(G))|=n-2$, then $n\geq 6$.
\end{itemize}

\end{theorem}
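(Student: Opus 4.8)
The plan is to handle the two parts separately, each time reducing to a configuration that is already settled, either in Theorem~\ref{thm: cut3} or in~\cite{PP2}. In both parts I will use that, since $G$ is solvable and $r$ is a cut vertex of the connected graph $\Delta(G)$, Lemma~\ref{P} forces $\Delta(G)-r$ to have exactly two connected components $C_1,C_2$, each of which is a clique; I relabel so that $1\le|C_1|\le|C_2|$.

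For part (i), $K_4$-freeness gives $|C_2|\le 3$, so $n\le 7$. I would use the normal $r$-complement $N\lhd G$ precisely to feed $\Delta(G)-r$ into~\cite{PP2}: since $|G:N|$ is a power of $r$, any $\chi\in\irr{G}$ lying over $\theta\in\irr{N}$ has $\chi(1)/\theta(1)$ equal to a power of $r$, whence $\rho(N)=\rho(G)\setminus\{r\}$ and two primes $p,q\ne r$ are adjacent in $\Delta(N)$ if and only if they are adjacent in $\Delta(G)$. Thus $\Delta(N)=C_1\sqcup C_2$ is a disconnected character graph of the solvable group $N$, so $|C_2|\ge 2^{|C_1|}-1$; combined with $|C_2|\le 3$ this forces $(|C_1|,|C_2|)\in\{(1,1),(1,2),(1,3),(2,3)\}$, and in particular it rules out the balanced case $(2,2)$ (a bowtie) and the case $(3,3)$. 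In the two remaining configurations where $C_2$ is a triangle, $K_4$-freeness prevents $r$ from being adjacent to all of $C_2$, so some vertex of $C_2$ is non-adjacent to $r$; this produces a pair of primes at distance $3$, hence $\Delta(G)$ has diameter $3$, and Theorem~\ref{thm: cut3} then forces $\Delta(G)-r$ to have size profile $\{1,s\}$ with $s\ge 4$ (structure (1)) or $\{m,s\}$ with $m\ge 8\ge s\ge 2$ (structure (2)) --- impossible for $\{1,3\}$ or $\{2,3\}$. Hence $n\le 4$. Inspecting the connected $K_4$-free graphs with a cut vertex on $3$ and on $4$ vertices, and discarding the path on four vertices (it has two cut vertices, contradicting Theorem~\ref{thm: cut3}; see also~\cite{Zh}), leaves exactly the path of length two and the graph in Figure~\ref{fig:10}.

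For part (ii), the driving observation is that $F(G)$ is nilpotent, hence the direct product of its Sylow subgroups, and a prime $p$ lies in $\rho(F(G))$ exactly when the Sylow $p$-subgroup of $F(G)$ is non-abelian; multiplying two nonlinear character degrees coming from two such primes shows that $\Delta(F(G))$ is the complete graph on $\rho(F(G))$, and lifting these degrees to $\irr{G}$ (each $\theta\in\irr{F(G)}$ lies under some $\chi\in\irr{G}$ with $\theta(1)\mid\chi(1)$) shows $\rho(F(G))$ is a clique of $\Delta(G)$. Since $\Delta(G)$ is $K_{n-1}$-free this gives $|\rho(F(G))|\le n-2$. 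If equality holds, then $r\notin\rho(F(G))$ leaves exactly one prime $s$ of $\rho(G)$ outside $\rho(F(G))\cup\{r\}$; since a clique is connected, a size count forces $\rho(F(G))$ to be one of the components $C_1,C_2$ and the other to be the singleton $\{s\}$. Because $r$ cannot dominate $\rho(F(G))$ --- that would create a clique of size $n-1$ --- the graph $\Delta(G)$ is a clique on the $n-2$ vertices of $\rho(F(G))$, joined through $r$ to the pendant vertex $s$, with some $t\in\rho(F(G))$ non-adjacent to $r$. Then $d(t,s)=3$, so $\Delta(G)$ has diameter $3$ and is of the shape of structure (1) in Theorem~\ref{thm: cut3}, whose clique part must have at least $4$ vertices; thus $n-2\ge 4$, i.e.\ $n\ge 6$. (The small cases $n\le 3$ do not occur under the hypotheses: $n\le 2$ admits no cut vertex, and a connected graph on $3$ vertices has an edge and is therefore not $K_2$-free.)

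The only non-routine point, in both parts, is arranging the correct reduction: exhibiting $\Delta(N)$ as a disconnected solvable character graph so that~\cite{PP2} disposes of the bowtie in part (i), and recognising the graph in part (ii) as an instance of structure (1) of Theorem~\ref{thm: cut3}. After that, the tension between $K_4$- (respectively $K_{n-1}$-) freeness and the forced clique sizes finishes each case. One must also check that the diameter is exactly $3$ before invoking Theorem~\ref{thm: cut3}, which is immediate from the explicit pair of primes at distance $3$ together with the standard fact that character graphs of solvable groups have diameter at most $3$.
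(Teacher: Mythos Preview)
Your proof is correct in both parts. Part (ii) is essentially the paper's argument: you recognise $\rho(F(G))$ as a clique of $\Delta(G)$ not containing $r$, bound its size, and in the equality case read off the shape $p-r-K_{n-2}$ with a missing edge at $r$, forcing diameter $3$ and hence $n\ge 6$. The only cosmetic difference is that you invoke Theorem~\ref{thm: cut3} (structure (1)) where the paper quotes~\cite{L3} directly, and you use $K_{n-1}$-freeness for the bound $|\rho(F(G))|\le n-2$ whereas the paper uses that $\rho(F(G))$ lies inside one of the two components of $\Delta(G)-r$.

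Part (i), however, takes a genuinely different route. The paper's key step is to show, via Gallagher, that the cut vertex $r$ is adjacent to \emph{every} other vertex of $\Delta(G)$; then $K_4$-freeness forces each component $C_i$ of $\Delta(G)-r$ to have at most two vertices, so $n\le 5$, and~\cite{PP2} applied to $\Delta(H)$ kills the balanced case $(2,2)$. You never establish that $r$ dominates; instead you only get $|C_i|\le 3$ from $K_4$-freeness of the cliques themselves, use~\cite{PP2} on $\Delta(N)$ to discard $(2,2)$ and $(3,3)$, and then eliminate the surviving triangle cases $(1,3)$ and $(2,3)$ by producing a pair at distance $3$ and checking that neither profile matches the two structures of Theorem~\ref{thm: cut3}. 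Both arguments are valid. The paper's is shorter and yields the extra structural fact that $r$ is a dominating vertex; yours trades that insight for a reduction to the diameter-$3$ classification already proved, so it is more self-contained within the paper but leans on the heavier Theorem~\ref{thm: cut3}.
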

\begin{proof}
 First suppose that $\Delta(G)$ is $K_{4}$-free and $G$ has a normal $r$-complement $H$. This implies that $\rho(H)=\rho(G)\setminus\{r\}$. Let $\chi$ be a nonlinear irreducible character of $G$. If $r$ does not divide $\chi(1)$, then $gcd(\chi(1),[G:H])=1$, hence $\chi_{H}\in {\rm Irr}(H)$, and so $\chi(1)\beta(1)\in {\rm cd}(G)$, for each $\beta(1)\in {\rm cd}(G/H)$. Therefore $r$ is joined to all vertices of $\Delta(G)$. As $r$ is a cut vertex of $\Delta(G)$ and $G$ is solvable, $\Delta(H)$ is a disconnected graph with two complete connected components. Since $\Delta(G)$ is $K_{4}$-free and $r$ is a neighbor of all vertices of $\Delta(G)$, each connected component of $\Delta(H)$, has at most two vertices. This implies that $n\leq 5$. The case $n=5$ is not possible since otherwise, solvability of $H$ yields that $\Delta(H)$ is not of the form $K_{2}\cup K_{2}$ ~\cite{PP2}. Thus $n\leq 4$. One can see that $n=4$ if and only if $\Delta(G)$ is the graph in Figure ~\ref{fig:10}, and $n=3$ if and only if it is a path of length two.

Consider the second case mentioned in the theorem. Note
that if $N$ is a normal subgroup of $G$ with
$\rho(N)=\rho(G)\setminus\{r\}$, then $\Delta(N)$ is a
disconnected graph with two complete components. As $F(G)$ is
nilpotent, $\Delta(F(G))$ is a complete graph. Thus if $F(G)$ is
non-abelian and $r\notin\rho(F(G))$, then as we discussed
$|\rho(F(G))|\leq n-2$.  In particular suppose that $\Delta(F(G))\simeq
K_{n-2}$. Since $r$ is a cut vertex of $\Delta(G)$ and $G$ is a solvable group, we can see that, for a prime $p$, $\Delta(G)$ has
the following form: $$p-r-K_{n-2}.$$
 As $\Delta(G)$ is $K_{n-1}$-free, there exists $t\in V(K_{n-2})$ such that $t$ is not adjacent to $r$. This implies that the diameter of $\Delta(G)$ is $3$, hence by ~\cite{L3}, we have $n\geq 6$.
\end{proof}

\begin{theorem}
Suppose $G$ is a solvable group whose $\Delta(G)$ is a connected graph with $|\rho(G)|\geq 4$. If we have one of the following cases, then $\Delta(G)$ is hamiltonian:
\begin{itemize}
\item[(i)] $\Delta(G)$ is a non-regular graph without a complete vertex and $F(G)$ is abelian.

\item[(ii)] $\Delta(G)$ is a non-complete regular graph.\end{itemize}
\end{theorem}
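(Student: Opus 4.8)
The plan is to reduce both parts to the single statement that $\Delta(G)$ has no cut vertex. Indeed, by Theorem~A of~\cite{EI} a connected graph on at least three vertices is Hamiltonian if and only if it is a block, i.e.\ has no cut vertex; since $|\rho(G)|\geq 4$ and $\Delta(G)$ is connected, it suffices in each case to rule out a cut vertex. (Equivalently, one may combine Chv\'{a}tal--Erd\"{o}s~\cite{CE} with the inequality $\alpha(\Delta(G))\leq\kappa(\Delta(G))+1$ from the proof of Theorem~\ref{NE}: if $\kappa(\Delta(G))\geq 2$ the conclusion is immediate, so again only $\kappa(\Delta(G))=1$ has to be treated.)

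For part $(ii)$, $\Delta(G)$ is a connected, non-complete, regular character graph of a solvable group, so by the results of~\cite{Z} used in the proof of Theorem~\ref{reg} it is $(n-2)$-regular (hence $\Delta(G)=K_n-M$ for a perfect matching $M$, with $n$ even). As $n=|\rho(G)|\geq 4$, its minimum degree $n-2$ is at least $n/2$, so Dirac's theorem (see~\cite{BM}) gives a Hamiltonian cycle; equivalently, an $(n-2)$-regular graph on $n\geq 4$ vertices is $(n-2)$-connected, hence a block with at least three vertices, and~\cite{EI} applies. The assumption ``non-regular'' in part $(i)$ merely keeps the two parts disjoint, since a regular graph with no complete vertex is already covered by part $(ii)$.

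For part $(i)$, assume toward a contradiction that $\Delta(G)$ has a cut vertex $v$; by Theorem~\ref{thm: cut3} it is the unique one, and since $|\rho(G)|\geq 4$, Theorem~\ref{cutt} rules out $deg(v)=2$ (that case would make $\Delta(G)$ a path of length two). Thus $deg(v)\geq 3$ and, exactly as in the proof of Theorem~\ref{thm: cut3}, $\Delta(G)-v$ has precisely two connected components $C_1,C_2$, each a clique, with $v$ having at least one neighbour in each of them. Since $\Delta(G)$ has no complete vertex, $v$ is non-adjacent to some vertex, say $t\in C_1\setminus N(v)$; choosing $l\in C_2\cap N(v)$, every $t$--$l$ path must pass through the cut vertex $v$ and cannot be $t-v-l$, so $d(t,l)\geq 3$, and as $diam(\Delta(G))\leq 3$ for solvable $G$ we conclude $diam(\Delta(G))=3$. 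By Theorem~\ref{thm: cut3}, $\Delta(G)$ then has one of the structures $(1)$ or $(2)$ listed there.

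It remains --- and this I expect to be the real obstacle --- to contradict the hypothesis that $F(G)$ is abelian, i.e.\ to show that a solvable group whose character graph has diameter $3$ must have a nonabelian Fitting subgroup. The tool I would use is the partition $\rho(G)=\rho_1\cup\rho_2\cup\rho_3\cup\rho_4$ from the proof of Theorem~\ref{cutt}: in both structures $(1)$ and $(2)$ one has $\rho_2=\{v\}$, while $\rho_3$ is a clique of size at least $3$ (by~\cite[Theorem~2]{Biss}) contained in $C_1$ or $C_2$, subject to the size bounds in~\cite[Theorem~4]{Biss} and~\cite{L3}. If $F=F(G)$ were abelian then $C_G(F)=F$ and, by It\^{o}'s theorem~\cite[Theorem~6.15]{IS}, $\chi(1)\mid[G:F]$ for every $\chi\in{\rm Irr}(G)$, whence $\rho(G)\subseteq\pi(G/F)$; feeding this into Lemma~\ref{lem:1} with a suitably chosen normal subgroup $K$ (using that the primes of $\rho(F(G/K))$ together with $\pi(G/N)$ form a clique in $\Delta(G)$ and are adjacent to all of $\rho(G)\setminus\pi(G/K)$) should force the ``long'' induced path realising $diam(\Delta(G))=3$ and the large clique $K_s$ (resp.\ $K_m$) demanded by structures $(1)$--$(2)$ to be incompatible; equivalently, the structure theorem for diameter-$3$ solvable character graphs of~\cite{L3} should exclude $\rho(F(G))=\emptyset$. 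Once diameter $3$ is ruled out, $\Delta(G)$ has no cut vertex, hence is a block on at least three vertices, and~\cite{EI} gives that $\Delta(G)$ is Hamiltonian, finishing part $(i)$.
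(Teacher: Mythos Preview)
Your treatment of part $(ii)$ is correct and in fact more elementary than the paper's: you use the regularity classification from~\cite{Z} to get $\Delta(G)=K_n-M$ and then apply Dirac's theorem (or observe $(n-2)$-connectedness), whereas the paper invokes the direct-product description from~\cite[Theorem~A]{kayo} and then~\cite[Corollary~C]{EI}. Both routes are short.

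For part $(i)$, however, there is a genuine gap. You reduce correctly to the situation where $\Delta(G)$ has a cut vertex, and the ``no complete vertex'' hypothesis then forces $diam(\Delta(G))=3$, landing in structure $(1)$ or $(2)$ of Theorem~\ref{thm: cut3}. But the decisive step --- showing that $F(G)$ abelian is incompatible with these structures --- is not proved: you write that feeding It\^{o}'s theorem and Lemma~\ref{lem:1} into the partition from~\cite{Biss} ``should force'' the long induced path and the required large clique to be incompatible, and that~\cite{L3} ``should exclude $\rho(F(G))=\emptyset$'', but neither claim is established. In particular, It\^{o}'s theorem only yields $\rho(G)\subseteq\pi(G/F)$, not a contradiction; and Lemma~\ref{lem:1} requires a choice of $K$ with $G/N$ nilpotent, which by itself does not pin down where the primes of $\rho_3$ and $\rho_4$ sit relative to $\pi(N/K)$. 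As written this is a programme, not a proof.

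The paper bypasses this difficulty entirely by using the hypothesis ``$F(G)$ abelian'' constructively rather than by contradiction. It invokes~\cite[Theorem~A]{ZUCC}: a solvable group with abelian Fitting subgroup whose character graph is non-regular and has no complete vertex is a direct product $G\simeq D_1\times\cdots\times D_k$ with each $\Delta(D_i)$ disconnected. Connectedness of $\Delta(G)$ forces $k\geq 2$, and then~\cite[Corollary~C]{EI} gives a Hamiltonian cycle immediately. Note that this is also why the hypothesis ``non-regular'' in part $(i)$ is not merely cosmetic: it is one of the standing assumptions of Morresi Zuccari's structure theorem.
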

\begin{proof}
If $\Delta(G)$ is a non-regular graph without a complete vertex and $F(G)$ is abelian, then
~\cite[Theorem A]{ZUCC} verifies that $G\simeq D_{1}\times...\times D_{n}$, where $\Delta(D_{i})$'s are disconnected graphs. Connectedness of $\Delta(G)$ verifies that $i\geq 2$. Now By ~\cite[Corollary C]{EI}, we conclude that $\Delta(G)$ is Hamiltonian.

On the other hand, if $\Delta(G)$ is a non-complete regular graph, then ~\cite[Theorem A]{kayo} implies that $G$ is the direct product of groups with disconnected character graphs of $2$ vertices. Similar to the previous case we can see that $\Delta(G)$ is Hamiltonian.
\end{proof}

\subsection*{Acknowledgement}
In this paper, S. Hossein-Zadeh is supported by National Elites
Foundation under Grant Number $15/93173$.


\begin{thebibliography}{99}
\bibitem{BM} J.A. Bondy, U.S.R. Murty, Graph Theory, \emph{Graduate texts in mathematics, Springer, New York}, 244, 2008.
\bibitem{BH} A.E. Brouwer, W.H. Haemers, Spectra of graphs, \emph{Springer, New York}, 2012.
\bibitem{CE} V. Chv\'{a}tal, P. Erd\"{o}s, A note on Hamiltonian circuits, \emph{Discrete Math.}, {2}(2) (1972), 111-113.
\bibitem{EI} M. Ebrahimi, A. Iranmanesh, M.A. Hosseinzadeh, Hamiltonian character graphs, \emph{J. Algebra}, {428} (2015), 54-66.
\bibitem{H} R. Hafezieh, Bipartite divisor graph for the set of irreducible character degrees, \emph{Int. J. Group
Theory}, {6}(4) (2017), 41-51.
\bibitem{HIHL} S. Hossein-Zadeh, A. Iranmanesh, M.A. Hosseinzadeh, M.L. Lewis, On graphs associated with character degrees and conjugacy class sizes of direct product of finite groups, \emph{Canad. Math. Bull.}, {58} (2015), 105-109.
\bibitem{Hu1} B. Huppert, Character Theory of Finite Groups, \emph{de Gruyter, Berlin}, 1998.
\bibitem{Hu} B. Huppert, W. Lempken, Simple groups of order divisible by at most four primes, \emph{Proceeding of the F. Scorina Gemel State University}, {16}(3) (2000), 64-75.
\bibitem{IS} I.M. Isaacs, Character theory of finite groups, \emph{Academic Press, New York}, 1976.
\bibitem{kayo} D.M. Kasyoki, P.O. Oleche, On regular prime graphs of solvable groups, \emph{Int. J. Algebra}, {10}(10) (2016), 491-495.
\bibitem{Las} M. Las Vergnas, A note on matchings in graphs, in ``Proceedings of the Symposium on Graphs and
Hypergraphs'', \emph{Brussels}, 1973.
\bibitem{ML2} M.L. Lewis, Solvable groups whose degree graphs have two connected
components, \emph{J. Group Theory}, {4(3)} (2001), 255-275.
\bibitem{ML3} M.L. Lewis, Solvable groups with character degree graphs having $5$ vertices and diameter $3$, \emph{Comm. Algebra}, {30}(11) (2002), 5485-5503.
\bibitem{ML4} M.L. Lewis, A solvable group whose character degree graph has diameter $3$, \emph{Proc. AMS},
{130} (2002), 625-630.
\bibitem{L3} M.L. Lewis, Classifying character degree graphs with $5$ vertices. In: Finite groups 2003,
247-265, Walter de Gruyter GmbH \& Co. KG, Berlin, 2004.
\bibitem{L} M.L. Lewis, An overview of graphs associated with character
degrees and conjugacy class sizes in finite groups, \emph{Rocky Mountain J. Math.}, {38}(1) (2008), 175-211.
\bibitem{LCUT} M.L. Lewis, Q. Meng, Solvable groups whose prime divisor character degree
graphs are $1$-connected, \emph{Monatsh. Math.}, (2019), https://doi.org/10.1007/s00605-019-01276-8.
\bibitem{O} O. Manz, T.R. Wolf, Representations of Solvable Groups, \emph{Cambridge Univ.
Press, Cambridge}, 1993.
\bibitem{MT} A. Moret\'{o}, P.H. Tiep, Prime divisors of character degrees, \emph{J.
Group Theory}, {11}(3) (2008), 341-356.
\bibitem{ZUCC} C.P. Morresi Zuccari,
Character degree graphs with no complete vertices, \emph{J. Algebra}, {353} (2012), 22-30.
\bibitem{Z} C.P. Morresi Zuccari, Regular character degree graphs, \emph{J. Algebra}, {411} (2014), 215-224.
\bibitem{PP} P. P\'{a}lfy, On the character degree graph of solvable groups,
I: Three primes, \emph{Period. Math. Hungar}, {36}(1)
(1998), 61-65.
\bibitem{PP2} P. P\'{a}lfy, On the character degree graph of solvable groups,
II: Disconnected Graphs, \emph{Studia Sci. Math. Hungar},
{38} (2001), 339-355.
\bibitem{Biss} C.B. Sass, Character degree graphs of solvable groups with diameter three, \emph{J. Group Theory}, 19(6) (2016), 1097-1127.
\bibitem{S} D.P. Sumner, Graphs with $1$-factors--preliminary report, \emph{Amer. Math. Soc. Notices}, {20} (1973), A 649.
\bibitem{Zh} J. Zhang, On a problem by Huppert, \emph{Acta Scientiarum Naturalium Universitatis
Pekinensis}, {34} (1998), 143-150.

\end{thebibliography}
\end{document}